\definecolor{LightGray}{gray}{0.9}
\newtheorem{theorem}{Theorem}
\newtheorem{lemma}{Lemma}
\newcommand{\ubar}[1]{\underline{#1}}
\newcommand{\revision}[2]{#2}
\newcommand{\x}{\mathbf{x}}
\newcommand{\Mpb}{M_{\text{pb}}}
\newcommand{\Mres}{M_{\text{r}}}
\newcommand{\Mth}{M_{\text{th}}}
\newcommand{\pd}{\mathbf{d}}
\newcommand{\pfmax}{\mathbf{\bar{f}}}
\newcommand{\pfmin}{\mathbf{\ubar{f}}}
\newcommand{\pg}{\mathbf{p}}
\newcommand{\pgmax}{\mathbf{\bar{p}}}
\newcommand{\res}{\mathbf{r}}
\newcommand{\resmax}{\mathbf{\bar{r}}}
\newcommand{\xith}{\xi_{\text{th}}}
\newcommand{\xir}{\xi_{\text{r}}}
\DeclareMathOperator*{\argmin}{arg-min}
\begin{document}

\title{End-to-End Feasible Optimization Proxies\\ for Large-Scale Economic Dispatch}

\author{Wenbo Chen,~\IEEEmembership{Student Member,~IEEE}, Mathieu Tanneau, Pascal Van Hentenryck,~\IEEEmembership{Member,~IEEE}
        % <-this % stops a space
% \thanks{This paper was produced by the IEEE Publication Technology Group. They are in Piscataway, NJ.}% <-this % stops a space
% \thanks{Manuscript received April 19, 2021; revised August 16, 2021.}
}

% The paper headers
\markboth{IEEE TRANSACTIONS ON POWER SYSTEMS}%
{Shell \MakeLowercase{\textit{et al.}}: A Sample Article Using IEEEtran.cls for IEEE Journals}

% \IEEEpubid{0000--0000/00\$00.00~\copyright~2021 IEEE}
% Remember, if you use this you must call \IEEEpubidadjcol in the second
% column for its text to clear the IEEEpubid mark.

\maketitle

\begin{abstract}
The paper proposes a novel End-to-End Learning and Repair (E2ELR)
architecture for training optimization proxies for economic dispatch
problems. E2ELR combines deep neural networks with closed-form,
differentiable repair layers, thereby integrating learning and
feasibility in an end-to-end fashion. E2ELR is also trained with
self-supervised learning, removing the need for labeled data and the
solving of numerous optimization problems offline. E2ELR is evaluated
on industry-size power grids with tens of thousands of buses using an
economic dispatch that co-optimizes energy and reserves.  The results
demonstrate that the self-supervised E2ELR achieves state-of-the-art
performance, with optimality gaps that outperform other baselines by
at least an order of magnitude.
\end{abstract}

\begin{IEEEkeywords}
Economic Dispatch, Deep Learning, Optimization Proxies
\end{IEEEkeywords}

\section*{Nomenclature}

\newlist{abbrv}{itemize}{1}
\setlist[abbrv,1]{label=,labelwidth=0.5in,align=parleft,itemsep=0.1\baselineskip,leftmargin=!}

\subsection{Sets and indices}

\begin{abbrv}
    \item[$i \in \mathcal{N}$] buses
    \item[$e \in \mathcal{E}$] branches
    \item[$g \in \mathcal{G}$] generators
\end{abbrv}

\subsection{Variables}

\renewcommand{\pg}{\mathbf{p}}
\newcommand{\pgbar}{\mathbf{\bar{p}}}

\begin{abbrv}
    \item[$p_{g}$] Energy dispatch of generator $g$
    \item[$r_{g}$] Reserve dispatch of generator $g$
    \item[$\xi_{e}$] Thermal limit violation on branch $e$
\end{abbrv}

\subsection{Parameters}

\newcommand{\PTDF}{\Phi}

\begin{abbrv}
    \item[$d_{i}$] Active power demand at bus $i$
    \item[$c_{g}$] Production cost function of generator $g$
    \item[$\ubar{f}_{e}, \bar{f}_{e}$] Lower and upper thermal limits on branch $e$
    % \item[$L$] Total energy demand
    \item[$\Mth$] Thermal violation penalty cost
    \item[$\bar{p}_{g}$] Maximum output of generator $g$
    \item[$\bar{r}_{g}$] Maximum reserve of generator $g$
    \item[$R$] Minimum reserve requirement
    \item[$\PTDF$] PTDF matrix
    \item[$\mathbf{0}$] Vector of all zeros
    \item[$\mathbf{e}$] Vector of all ones
\end{abbrv}

\section{Introduction}
\label{sec:intro}

The optimal power flow (OPF) is a fundamental problem in power systems
operations.  Its linear approximation, the DC-OPF model, underlies
most electricity markets, especially in the US.  For instance, MISO
uses a security-constrained economic dispatch (SCED) in their
real-time markets, which has the DC-OPF model at its core
\cite{BPM_002}.
\revision{}{The continued growth in renewable and distributed energy resources (DERs) has caused an increase in operational uncertainty.
This creates need for new methodologies and operating practices that can quantify and manage risk in real time \cite{MISO2023_ReliabilityImperative,Stover2023_ReliabilityRiskMetrics}.
Nevertheless, quantifying the operational risk of a transmission system requires executing in the order of $10^{3}$ Monte Carlo (MC) simulations, each requiring the solution of multiple (typically in the hundreds) economic dispatch problems \cite{Stover2022_JITRALF,Stover2023_ReliabilityRiskMetrics}.
To fit within the constraints of actual operations, real-time risk assessment would thus require solving hundreds of thousands of economic dispatch problems within a matter of seconds.
This represents a 100,000x speedup over current optimization technology, which can typically solve economic dispatch problems in a few seconds.}

In recent years, there has been a surge of interest, both from the
power systems and Machine-Learning (ML) communities, in developing
optimization proxies for OPF, i.e., ML models that approximate the
input-output mapping of OPF problems.  The main idea is that, once
trained, these proxies can be used to generate high-quality solutions
orders of magnitude faster than traditional optimization solvers.
This capability allows to evaluate a large number of scenarios fast,
thereby enabling real-time risk assessment.

There are however three major obstacles to the deployment of
optimization proxies: \revision{}{feasibility, scalability and adaptability}.
First, ML models are not guaranteed to satisfy
the physical and engineering constraints of the model.  This causes
obvious issues if, for instance, the goal is to use a proxy to
evaluate whether the system is able to operate in a safe state.
Several approaches have been proposed in the past to address the
feasibility of ML predictions, each of which has some advantages
and limitations.  Second, most results in ML for power systems has
considered systems with up to 300 buses, far from the size of actual
systems that contain thousands of buses. Moreover, the resulting
techniques have not been proven to scale or be accurate enough on
large-scale networks.  Third, power grids are not static: the
generator commitments and the grid topology evolve over time,
typically on an hourly basis \cite{Chen2022_Learning4SCED}.
\revision{}{Likewise, the output of renewable generators such as wind and solar generators is non-stationary, which may degrade the performances of ML models.}
Therefore, it is important to be able to (re)train models fast,
typically within a few hours at most.  This obviously creates a
computational bottleneck for approaches that rely on the offline
solving of numerous optimization problems to generate training data.

\revision{}{Despite significant interest from the community, no approach has thus far addressed all three challenges (feasibility, scalability and adaptability) in a unified way.}
This paper addresses \revision{}{this gap} by proposing an {\em
  End-to-End Learning and Repair} (E2ELR) architecture for a
(MISO-inspired) economic dispatch (ED) formulation where the prediction and feasibility
restoration are integrated in a single ML pipeline and trained
jointly. The E2ELR-ED architecture guarantees feasibility, i.e., it
always outputs a feasible solution to the economic dispatch.  The E2ELR-ED
architecture achieves these results through {\em closed-form repair
  layers}. Moreover, through the use of {\em self-supervised
  learning}, the E2ELR-ED architecture is scalable, both for
training and inference, and \revision{}{is found to produce} near-optimal solutions to systems
with tens of thousands of buses.
\revision{}{Finally, the use of self-supervised learning also avoids the costly data-generation process of supervised-learning methods.
This allows to re-train the model fast, smoothly adapting to changing operating parameters and system conditions.}
The contributions of the paper can therefore be summarized as follows:
\begin{itemize}

\item \revision{}{The paper proposes a novel E2ELR architecture with closed-form, differentiable repair layers, whose output is guaranteed to satisfy power balance and reserve requirements.
This is the first architecture to consider reserve requirements, and to offer feasibility guarantees without restrictive assumptions.}

\item \revision{}{The E2ELR model is trained in an end-to-end fashion that combines learning and feasibility restoration, making the approach scalable to large-scale systems.
This contrasts to traditional approaches that restore feasibility at inference time,
which increases computing time and induces significant losses in accuracy.}

\item \revision{}{The paper proposes self-supervised learning to eliminate the need for offline generation of training data.
The proposed E2ELR can thus be trained from scratch in under an hour, even for large systems.
This allows to adapt to changing system conditions by re-training the model periodically.}

\item \revision{}{The paper conducts extensive numerical experiments on systems with up to 30,000 buses, a 100x increase in size compared to most existing studies.
The results demonstrate that the proposed self-supervised E2ELR architecture is highly scalable and outperforms other approaches.}
\end{itemize}

\noindent
The rest of the paper is organized as follows.  Section
\ref{sec:literature} surveys the relevant literature. Section
\ref{sec:overview} presents an overview of the E2ELR architecture and
contrast it with existing approaches.  Section \ref{sec:layers}
presents the problem formulation and the E2ELR architecture in detail.
Section \ref{sec:trainig_methodology} presents supervised and
self-supervised training.  Section \ref{sec:experiments} describes the
experiment setting, and Section \ref{sec:results} reports numerical
results.  Section \ref{sec:conclusion} concludes the paper and
discusses future research directions.

\section{Related works}
\label{sec:literature}

\subsection{Optimization Proxies for OPF}
\label{sec:literature:proxies}

The majority of the existing literature on OPF proxies employs
Supervised Learning (SL) techniques. Each data point $(x, y)$ consists
of an OPF instance data ($x$) and its corresponding solution ($y$).
The training data is obtained by solving a large number --usually tens
of thousands-- of OPF instances offline.  The SL paradigm has
successfully been applied both in the linear DCOPF
\cite{Ng2018_StatisticalLearningDCOPF,pan2020deepopf,Nellikkath2021_PINN-DCOPF,zhao2022ensuring,Chen2022_Learning4SCED,stratigakos2023interpretable,Ferrando2023_PERFORM_NYU}
and nonlinear, non-convex \revision{}{ACOPF
\cite{Guha2019_ML4ACOPF,Zamzam2020_LearningOPF,Owerko2020_OPFusingGNN,fioretto2020predicting,chatzos2021spatial,donti2021dc3,nellikkath2022_PINN-ACOPF,pan2022deepopf,pan2022deepopf-AL,Zhou2022_DeepOPF-FT,liu2022topology,falconer2022leveraging,Owerko2022_unsupervisedOPFusingGNN,Pham2022_ReduceOPFwithGNN,gao2023physics,park2023compact,mitrovic2023data,gupta2022dnn}}
settings.
In almost all the above references, the generator
commitments and grid topology are assumed to be fixed, with
electricity demand being the only source of variability.  Therefore,
these OPF proxies must be re-trained regularly to capture the hourly
changes in commitments and topology that occur in real-life operations
\cite{Chen2022_Learning4SCED}.  
In a SL setting, this comes at a high computational cost because of the need to re-generate training data.
Recent works consider active sampling techniques to reduce this burden
\cite{Klamkin2022_ActiveBucketizedSampling,Hu2023_OPFWorthLearning}.
\revision{}{References \cite{Owerko2020_OPFusingGNN,Owerko2022_unsupervisedOPFusingGNN,Pham2022_ReduceOPFwithGNN,gao2023physics} consider graph neural network (GNN) architectures to accommodate topology changes, however, numerical results are only reported on small networks with at most 300 buses.}

Self-Supervised Learning (SSL) has emerged as an alternative to SL
that does not require labeled data
\cite{Huang2021_DeepOPF-NGT,wang2022fast,park2023self}.  Namely,
training OPF proxies in a self-supervised fashion does \emph{not}
require the solving of any OPF instance offline, thereby removing the
need for (costly) data generation.  In \cite{Huang2021_DeepOPF-NGT},
the authors train proxies for ACOPF where the training loss consists
of the objective value of the predicted solution, plus a penalty term
for constraint violations.  A similar approach is used in
\cite{wang2022fast} in conjunction with Generative Adversarial
Networks (GANs).  More recently, Park et al \cite{park2023self}
jointly train a primal and dual network by mimicking an Augmented
Lagrangian algorithm. Predicting Lagrange multipliers allows for
dynamically adjusting the constraint violation penalty terms in the loss
function.  Current results suggest that SSL-based proxies can match
the accuracy of SL-based proxies.

\subsection{Ensuring Feasibility}
\label{sec:literature:feasibility}

One major limitation of ML-based OPF proxies is that, in general, the
predicted OPF solution violates physical and engineering constraints
that govern power flows and ensure safe operations.  To alleviate this
issue, \cite{Zamzam2020_LearningOPF,zhao2022ensuring} use a
restricted OPF formulation to generate training data, wherein the OPF
feasible region is artificially shrunk to ensure that training data
consists of interior solutions.  In \cite{zhao2022ensuring}, this
strategy is combined with a verification step (see also
\cite{Venzke2021_DNNVerificationPowerSystems}) to ensure the trained
models have sufficient capacity to reach a universal approximation.
Nevertheless, this requires solving bilevel optimization problems,
which is very cumbersome: \cite{zhao2022ensuring} reports training
times in excess of week for a 300-bus system.  In addition, it may not
be possible to shrink the feasible region in general, e.g., when
the lower and upper bounds are the same. 

    % Active set methods

In the context of DCOPF,
\cite{Ng2018_StatisticalLearningDCOPF,Chen2022_Learning4SCED,stratigakos2023interpretable,Ferrando2023_PERFORM_NYU}
exploit the fact that an optimal solution can be quickly recovered
from an (optimal) active set of constraints.  A combined
classification-then-regression architecture is proposed in
\cite{Chen2022_Learning4SCED}, wherein a classification step
identifies a subset of variables to be fixed to their lower or upper
bound, thus reducing the dimension of the regression task.  In
\cite{Ng2018_StatisticalLearningDCOPF,Ferrando2023_PERFORM_NYU}, the
authors predict a full active set, and recover a solution by solving a
system of linear equations.  Similarly,
\cite{stratigakos2023interpretable} combine decision trees and active
set-based affine policies.  Importantly, active set-based approaches
may yield infeasible solutions when active constraints are incorrectly
classified \cite{Ferrando2023_PERFORM_NYU}.  Furthermore, correctly
identifying an optimal active set becomes harder as problem size
increases.
    
A number of prior work have investigated physics-informed models
(e.g.,
\cite{Zamzam2020_LearningOPF,fioretto2020predicting,chatzos2021spatial,Chen2022_Learning4SCED,pan2022deepopf,nellikkath2022_PINN-ACOPF,Huang2021_DeepOPF-NGT,wang2022fast}).
This approach augments the training loss function with a term that
penalizes constraint violations, and is efficient at reducing -- but
not eliminating -- constraint violations.  To better balance
feasibility and \cite{fioretto2020predicting,chatzos2021spatial}
dynamically adjust the penalty coefficient using ideas from Lagrangian
duality.  In a similar fashion, \cite{park2023self} uses a primal and
a dual networks: the latter predicts optimal Lagrange multipliers,
which inform the loss function used to train the former.
    
    % Feasibility restoration

Although physics-informed models generally exhibit lower constraint
violations, they still do not produce feasible solutions.  Therefore,
several works combine an (inexact) OPF proxy with a repair step that 
\revision{feasibility restoration step}{restores feasibility}.
A projection step is used in \cite{pan2020deepopf,fioretto2020predicting}, wherein the (infeasible)
predicted solution is projected onto the feasible set of OPF.  In
DCOPF, this projection is a convex (typically linear or quadratic)
problem, whereas the load flow model used in
\cite{fioretto2020predicting} for ACOPF is non-convex.  Instead of a
projection step, \cite{Zamzam2020_LearningOPF} uses an AC power flow
solver to recover voltage angles and reactive power dispatch from
predicted voltage magnitudes and generator dispatches.  This is
typically (much) faster than a load flow.  However, while the
resulting solution satisfies the power flow equations (assuming the
solver converges), it may not satisfy all engineering constraints such
as the thermal limits of the lines.  Finally,
\cite{Taheri2023_RestoringACOPFFeasibility} uses techniques from state
estimation to restore feasibility for ACOPF problems.  This approach
has not been applied in the context of OPF proxies.

The development of implicit differentiable layers
\cite{Agrawal2019_CVXLayer} makes it possible to embed feasibility
restoration inside the proxy architecture itself, thereby removing the
need for post-processing
\cite{donti2021dc3,kim2022projection,Li2022_LOOP-LC}.  This allows to
train models in an \emph{end-to-end} fashion, i.e., the predicted
solution is guaranteed to satisfy constraints.  For instance,
\cite{kim2022projection} implement the aforementioned projection step
as an implicit layer.  However, because they require solving an
optimization problem, these implicit layers incur a very high
computational cost, both during training and testing.  Equality
constraints can also be handled implicitly via so-called constraint
completion \cite{donti2021dc3,Li2022_LOOP-LC}.  Namely, a set of
independent variables is identified, and dependent variables are
recovered by solving the corresponding system of equations, thereby
satisfying equality constraints by design.  Note that constraint
completion requires the set of independent variables to be the same
across all instances, which may not hold in general.  For instance,
changes in generator commitments and/or grid topology may introduce
dependencies between previously-independent variables. The difference
between \cite{donti2021dc3} and \cite{Li2022_LOOP-LC} lies in the
treatment of inequality constraints.  On the one hand,
\cite{donti2021dc3} replace a costly implicit layer with cheaper
gradient unrolling, which unfortunately does not guarantee
feasibility.  On the other hand, \cite{Li2022_LOOP-LC} use gauge
functions to define a one-to-one mapping between the unit hypercube,
which is easy to enforce with sigmoid activations, and the set of
feasible solutions, thereby guaranteeing feasibility.  Nevertheless,
the latter approach is valid only under restrictive assumptions: all
constraints are convex, the feasible set is bounded, and a strictly
feasible point is available for each instance.

\subsection{Scalability Challenges}
\label{sec:literature:scalability}

There exists a significant gap between the scale of actual power
grids, and those used in most academic studies: the former are
typically two orders of magnitude larger than the latter.  On the one
hand, actual power grids comprise thousands to tens of thousands of
buses \cite{Josz2016_ACOPF_PegaseRTE,Holzer2022_MISO_SFT}.  On the
other hand, most academic studies only consider small, artificial
power grids with no more than 300 buses.  Among the aforementioned
works, \cite{Ferrando2023_PERFORM_NYU} considers a synthetic NYISO
grid with 1814 buses, and only
\cite{chatzos2021spatial,Chen2022_Learning4SCED,park2023compact}
report results on systems with more than 6,000 buses.  This
discrepancy makes it difficult to extrapolate most existing findings
to scenarios encountered in the industry.  Indeed, actual power grids
exhibit complex behaviors not necessarily captured by small-scale
cases \cite{Josz2016_ACOPF_PegaseRTE}.

\section{Overview of the Proposed Approach}
\label{sec:overview}

The paper addresses the shortcomings in current literature by
combining learning and feasibility restoration in a single E2ELR
architecture.  Figure \ref{fig:pipeline:DCOPF} illustrates the
proposed architecture (Figure \ref{fig:pipeline:DCOPF:E2ELR}),
alongside existing architectures from previous works.  In contrast to
previous works, the proposed E2ELR uses specialized, closed-form
repair layers that allow the architecture to scale to industry-size
systems. E2ELR is also trained with self-supervised learning,
alleviating the need for labeled data and the offline solving of
numerous optimization problems. As a result, even for the largest
systems considered, the self-supervised E2ELR is trained from scratch
in under an hour, and achieves state-of-the-art performance,
outperforming other baselines by an order of magnitude. Note that
E2ELR also bridges the gap between academic DCOPF formulations and
those used in the industry, by including reserve requirements in the
ED formulation.  To the best of the authors' knowledge, this is the
first work to explicitly consider --and offer feasibility guarantees
for-- reserve requirements in the context of optimization proxies.
Moreover, the repair layers are \revision{}{guaranteed to satisfy power balance and reserve requirements for any combination of min/max generation limits (see Theorems \ref{thm:power_balance_layer} and \ref{thm:reserve_layer:guaranteed_feasibility}).}
\revision{}{This allows} to accommodate variations in
operating parameters such as min/max limits and commitment status of
generators, a key aspect of real-life systems overlooked
in existing literature.

\begin{figure}
    \centering
    \subfloat[The vanilla DNN architecture without feasibility restoration.]{
        \includegraphics[width=0.95\columnwidth]{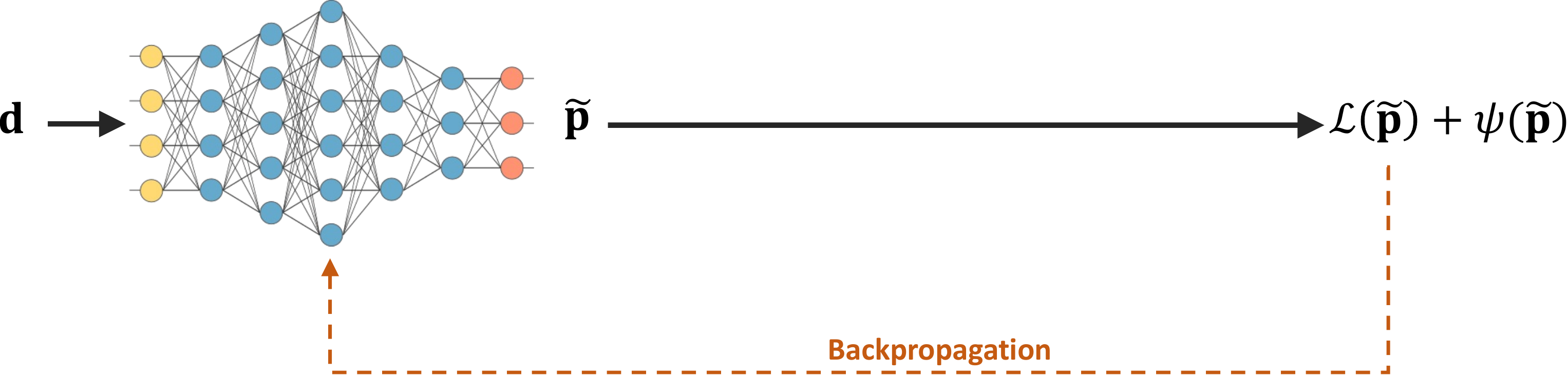}
        \label{fig:pipeline:DCOPF:DNN}
    }\\
    \subfloat[\revision{blue}{The DeepOPF architecture \cite{pan2020deepopf}. The output $\mathbf{\hat{p}}$ may violate inequality constraints.}]{
        \includegraphics[width=0.95\columnwidth]{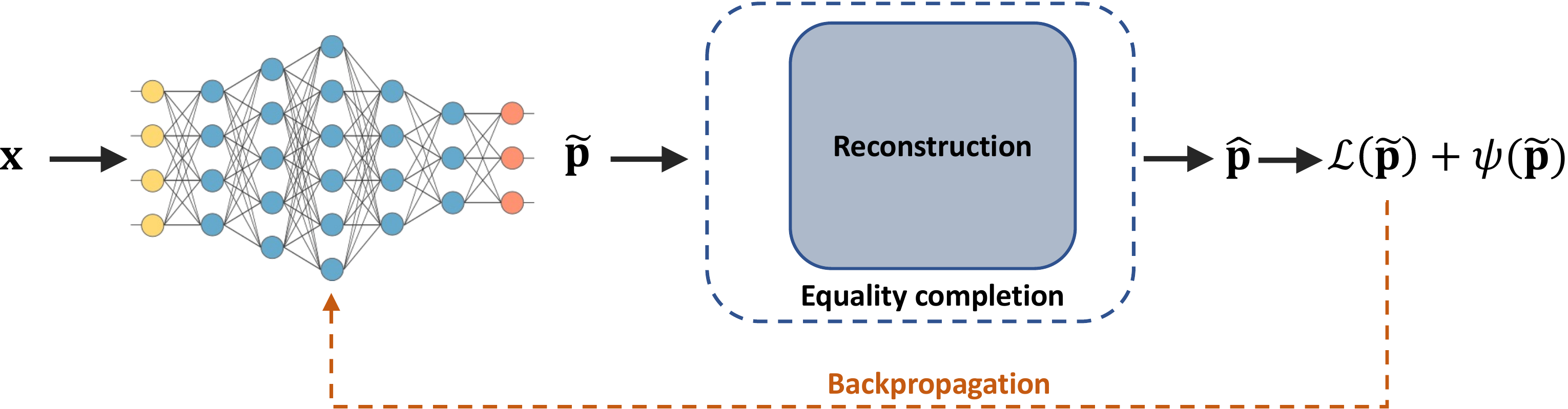}
        \label{fig:pipeline:DCOPF:DeepOPF}
    }\\
    \subfloat[The DC3 architecture with unrolled gradient \cite{donti2021dc3}. The output $\mathbf{\tilde{\tilde{p}}}$ may violate inequality constraints.]{
        \includegraphics[width=0.95\columnwidth]{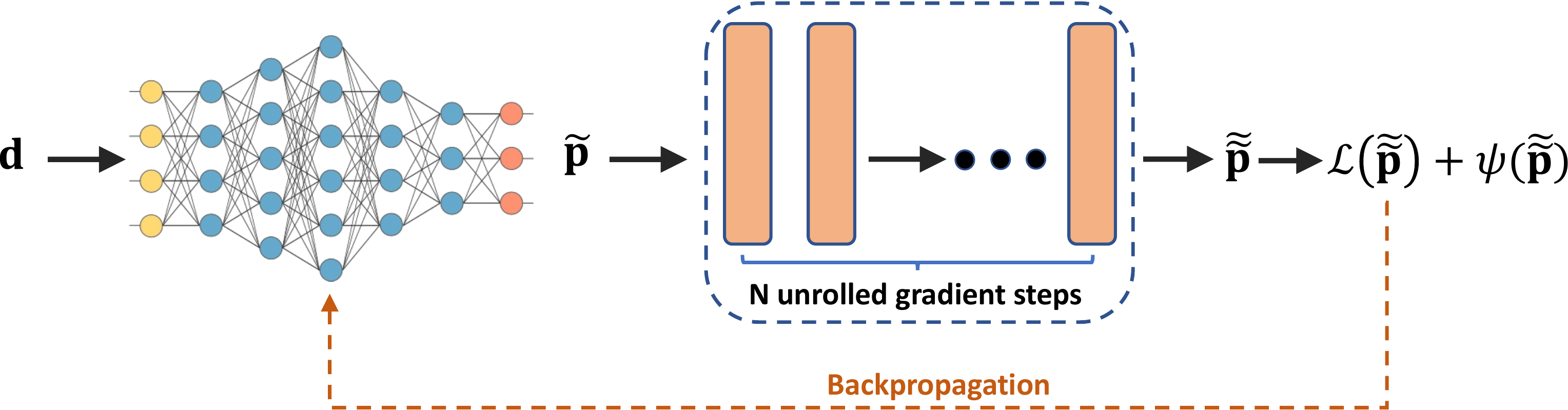}
        \label{fig:pipeline:DCOPF:DC3}
    }\\
    \subfloat[{The LOOP-LC architecture \cite{Li2022_LOOP-LC}. The DNN outputs a latent vector $\mathbf{z} \in [0,1]^{n}$,  which is mapped onto a feasible $\mathbf{\hat{p}}$ via a gauge mapping.}]{
        \includegraphics[width=0.95\columnwidth]{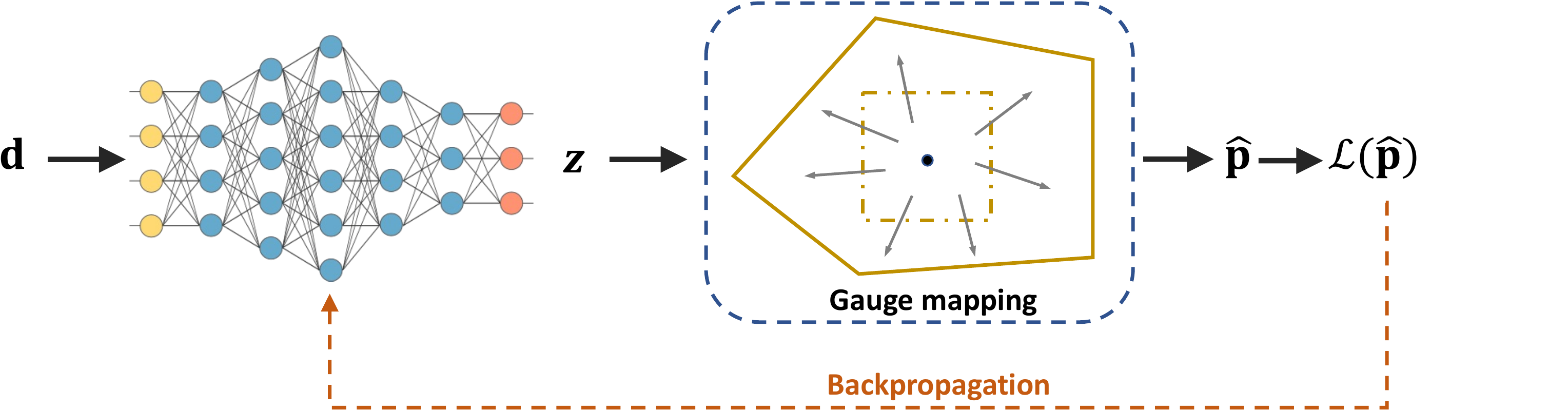}
        \label{fig:pipeline:DCOPF:LOOP}
    }\\
    \subfloat[The proposed end-to-end feasible architecture. The output $\mathbf{\hat{p}}$ satisfies all hard constraints.]{
        \includegraphics[width=0.95\columnwidth]{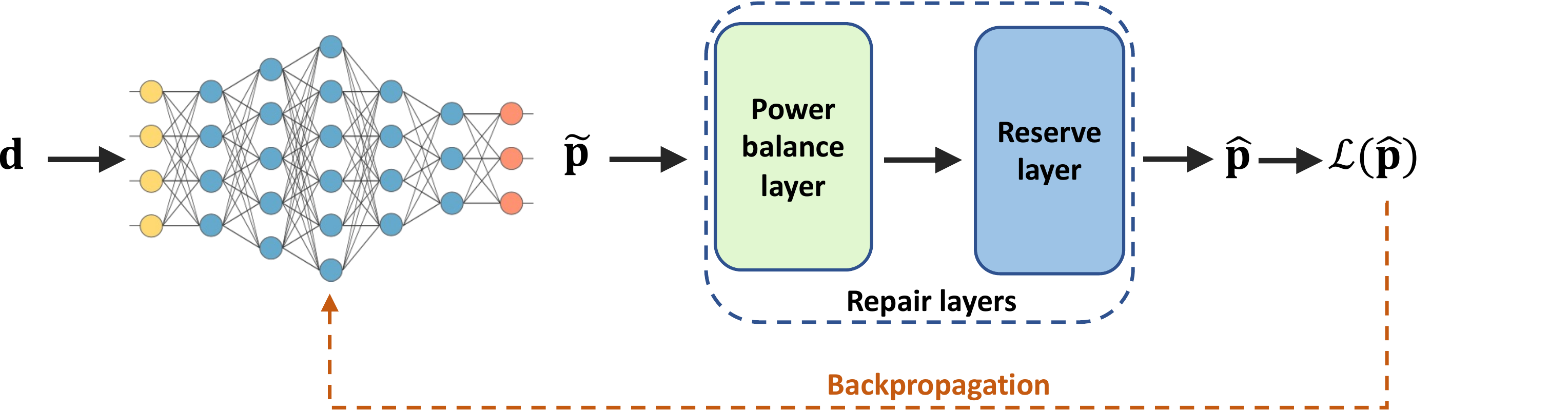}
        \label{fig:pipeline:DCOPF:E2ELR}
    }
    \caption{Optimization proxy architectures for DCOPF}
    \label{fig:pipeline:DCOPF}
\end{figure}

\section{End-to-end feasible proxies for DCOPF}
\label{sec:layers}

    % \mt{There is a clear connection between our feasibility layers and the simplex algorithm. The way we are increasing/decreasing energy dispatch is exactly what simplex pivots would be doing (except we move multiple generators at the same time).}
    % \mt{Additional questions on the methodology:\\
    % * What about two categories of reserve?\\
    % * Can we do a weighted proportional response? (so that we increase the dispatch of cheap generators more than we increase that of expensive generators)}

This section presents the Economic Dispatch (ED) formulation
considered in the paper, and introduces new repair layers for power
balance and reserve requirement constraints.  The repair layers are
computationally efficient and differentiable: they can be implemented
in standard machine-learning libraries, enabling end-to-end feasible
optimization proxies.
% Because of space limitations, all proofs are deferred to Appendix \ref{app:proofs}.

\subsection{Problem Formulation}
\label{sec:DCOPF}
    
The paper considers an ED formulation with reserve requirements. It
is modeled as a linear program of the form
    \begin{subequations}
    \label{eq:DCOPF}
    \begin{align}
        \min_{\pg, \res, \xith} \quad & c(\pg) + \Mth \| \xith \|_{1}\\
        \text{s.t.} \quad
            & \mathbf{e}^{\top} \pg = \mathbf{e}^{\top} \pd, \label{eq:DCOPF:power_balance}\\
            & \mathbf{e}^{\top} \res \geq R, \label{eq:DCOPF:reserve_requirements}\\
            & \pg + \res \leq \pgmax,  \label{eq:DCOPF:eco_max}\\
            & \mathbf{0} \leq \pg \leq \pgmax, \label{eq:DCOPF:dispatch_bounds}\\
            & \mathbf{0} \leq \res \leq \resmax, \label{eq:DCOPF:reserve_bounds}\\
            & \mathbf{\ubar{f}} -\xith \leq \Phi (\pg - \pd)  \leq \mathbf{\bar{f}} +\xith, \label{eq:DCOPF:PTDF}\\
            & \xith \geq \mathbf{0}.  \label{eq:DCOPF:thermal_slack_positive}
    \end{align}
    \end{subequations}

Constraints \eqref{eq:DCOPF:power_balance} and
\eqref{eq:DCOPF:reserve_requirements} are the global power balance and
minimum reserve requirement constraints, respectively.  Constraints
\eqref{eq:DCOPF:eco_max} ensure that each generator reserves can be
deployed without violating their maximum capacities.  Constraints
\eqref{eq:DCOPF:dispatch_bounds} and \eqref{eq:DCOPF:reserve_bounds}
enforce minimum and maximum limits on each generator energy and
reserve dispatch.  Without loss of generality, the paper assumes that
each bus has exactly one generator, each generator minimum output is
zero, and $\bar{r}_{g} \leq \bar{p}_{g}, \forall g$. Constraints
\eqref{eq:DCOPF:PTDF} express the thermal constraints on each branch using
a Power Transfer Distribution Factor (PTDF) representation.  In this
paper, the thermal constraints are soft constraints, i.e., they can be
violated but doing so incurs a (high) cost.  This is modeled via
artificial slack variables $\xith$ which are penalized in the
objective.  Treating thermal constraints as soft is in line with
economic dispatch formulations used by system operators to clear
electricity markets in the US \cite{Ma2009_MISO_SCED,BPM002_D}.  The
PTDF-based formulation is also the state-of-the-art approach used in
industry \cite{Ma2009_MISO_SCED,Holzer2022_MISO_SFT}.  In typical
operations, only a small number of these constraints are active at the
optimum.  Therefore, efficient implementations add thermal constraints
\eqref{eq:DCOPF:PTDF} lazily.

The hard constraints in Problem \eqref{eq:DCOPF} are the bounds on
energy and reserve dispatch, the maximum output, the power balance
\eqref{eq:DCOPF:power_balance} and the reserve requirements
\eqref{eq:DCOPF:reserve_requirements}.  Note that bounds on individual
variables can easily be enforced in a DNN architecture e.g., via
clamping or sigmoid activation. However, it is not trivial to
\emph{simultaneously} satisfy variable bounds, power balance and
reserve requirements.  To address this issue, the rest of this section
introduces new, computationally efficient repair layers.

\subsection{The Power Balance Repair Layer}
\label{sec:layers:power_balance}

    \newcommand{\hypercube}{\mathcal{H}}
    \newcommand{\hypersimplex}[1]{\mathcal{S}_{#1}}
    \newcommand{\etaup}{\eta^{\uparrow}}
    \newcommand{\etadn}{\eta^{\downarrow}}

The proposed power balance repair layer takes as input an initial
dispatch vector $\pg$, which is assumed to satisfy the min/max
generation bounds \eqref{eq:DCOPF:dispatch_bounds}, and outputs a
dispatch vector $\tilde{\pg}$ that satisfies constraints
\eqref{eq:DCOPF:dispatch_bounds} and \eqref{eq:DCOPF:power_balance}.
Formally, let $D \, {=} \, \mathbf{e}^{\top} \pd$, and denote by
$\hypercube$ and $\hypersimplex{D}$ the following hypercube and
hypersimplex
    \begin{align}
        \label{eq:layers:hypercube}
        \hypercube &= \left\{
            \pg \in \mathbb{R}^{n}
        \ \middle| \ 
            \mathbf{0} \leq \pg \leq \mathbf{\bar{p}}
        \right\},\\
        \label{eq:layers:hypersimplex}
        \hypersimplex{D} &= \left\{
            \pg \in \mathbb{R}^{n}
        \ \middle| \ 
            \mathbf{0} \leq \pg \leq \mathbf{\bar{p}}, \ 
            \mathbf{e}^{\top} \pg = D
        \right\}.
    \end{align}

\noindent
Note that $\hypercube$ is the feasible set of constraints
\eqref{eq:DCOPF:dispatch_bounds}, while $\hypersimplex{D}$ is the
feasible set of constraints \eqref{eq:DCOPF:power_balance} and
\eqref{eq:DCOPF:dispatch_bounds}. The proposed power balance repair
layer, denoted by $\mathcal{P}$, is given by
    \begin{align}
        \label{eq:power_balance_layer}
        \begin{array}{rl}
            % \mathcal{P}: \quad
            % \hypercube & \longrightarrow \quad \hypersimplex{D}\\
            \mathcal{P}(\pg) & = \left\{
            \begin{array}{ll}
                (1 - \etaup) \pg + \etaup \mathbf{\bar{p}} &  \text{if } \mathbf{e}^{\top} \pg < D\\
                (1 - \etadn) \pg + \etadn \mathbf{0}  &  \text{if } \mathbf{e}^{\top} \pg \geq D
            \end{array}
        \right.
        \end{array}
    \end{align}
    where $\pg \, {\in} \, \mathcal{H}$, and $\etaup$, $\etadn$ are defined as follows:
    \begin{align}
        \label{eq:power_balance_layer:ratios}
        \etaup   &= \frac{\mathbf{e}^{\top}\pd - \mathbf{e}^{\top} \pg}{\mathbf{e}^{\top} \mathbf{\bar{p}} - \mathbf{e}^{\top}\pg}, &
        \etadn &= \frac{\mathbf{e}^{\top}\pg - \mathbf{e}^{\top}\pd}{\mathbf{e}^{\top} \pg - \mathbf{e}^{\top} \mathbf{0}}.
    \end{align}
    Theorem \ref{thm:power_balance_layer} below shows that $\mathcal{P}$ is well-defined.

\begin{theorem}
    \label{thm:power_balance_layer}
    Assume that $0 < \mathbf{e}^{\top} \pd = D < \mathbf{e}^{\top} \pgmax$ and $\pg \in \hypercube$.\\
    Then, $\mathcal{P}(\pg) \in \hypersimplex{D}$.
\end{theorem}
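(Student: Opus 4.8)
The plan is to establish the two defining properties of membership in $\hypersimplex{D}$---the box bounds $\mathbf{0} \leq \mathcal{P}(\pg) \leq \pgmax$ and the power balance equality $\mathbf{e}^{\top}\mathcal{P}(\pg) = D$---by analyzing the two branches of definition \eqref{eq:power_balance_layer} separately. In each branch I would proceed in three short steps: first check that the mixing coefficient ($\etaup$ or $\etadn$) is well-defined and lies in $[0,1]$; then observe that $\mathcal{P}(\pg)$ is a convex combination of two points already in the hypercube $\hypercube$, so the box bounds follow from convexity of $\hypercube$; and finally verify the equality by taking the inner product with $\mathbf{e}$, whereupon the definition of the coefficient makes the expression collapse to $D$.

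For the up-branch, where $\mathbf{e}^{\top}\pg < D$, the chain of inequalities $\mathbf{e}^{\top}\pg < D < \mathbf{e}^{\top}\pgmax$ (the right bound being the hypothesis $D < \mathbf{e}^{\top}\pgmax$) shows at once that the denominator $\mathbf{e}^{\top}\pgmax - \mathbf{e}^{\top}\pg$ of $\etaup$ is strictly positive, so $\etaup$ is well-defined, and that its positive numerator $D - \mathbf{e}^{\top}\pg$ is strictly smaller than the denominator, so $\etaup \in (0,1)$. Since $\pg \in \hypercube$ and $\pgmax \in \hypercube$, the convex combination $\mathcal{P}(\pg) = (1-\etaup)\pg + \etaup\pgmax$ lies in $\hypercube$. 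Pairing with $\mathbf{e}$ gives $\mathbf{e}^{\top}\mathcal{P}(\pg) = \mathbf{e}^{\top}\pg + \etaup(\mathbf{e}^{\top}\pgmax - \mathbf{e}^{\top}\pg)$, and substituting the definition of $\etaup$ cancels the denominator to leave exactly $D$.

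For the down-branch, where $\mathbf{e}^{\top}\pg \geq D$, the argument is symmetric with $\pgmax$ replaced by $\mathbf{0}$. The denominator of $\etadn$ is $\mathbf{e}^{\top}\pg$, which is strictly positive because $\mathbf{e}^{\top}\pg \geq D > 0$ by hypothesis; and $\etadn = (\mathbf{e}^{\top}\pg - D)/\mathbf{e}^{\top}\pg$ satisfies $0 \leq \mathbf{e}^{\top}\pg - D < \mathbf{e}^{\top}\pg$ (the strict upper bound using $D > 0$), so $\etadn \in [0,1)$. Again $\mathcal{P}(\pg) = (1-\etadn)\pg + \etadn\mathbf{0}$ is a convex combination of points of $\hypercube$, hence lies in $\hypercube$, and the same inner-product computation yields $\mathbf{e}^{\top}\mathcal{P}(\pg) = (1-\etadn)\mathbf{e}^{\top}\pg = D$.

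I expect the only delicate points to be the strict positivity of the two denominators and the containment of the coefficients in $[0,1]$; both hinge exactly on the two-sided hypothesis $0 < D < \mathbf{e}^{\top}\pgmax$, which is precisely why it is assumed (the left bound rules out a degenerate down-branch denominator, the right bound a degenerate up-branch denominator). Once these are secured, convexity of $\hypercube$ disposes of the bound constraints uniformly and the equality reduces to an algebraic identity in each case, so no further machinery is needed.
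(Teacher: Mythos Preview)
Your proposal is correct and follows essentially the same approach as the paper: in each branch, verify that the mixing coefficient lies in $[0,1]$, invoke convexity of $\hypercube$ for the bound constraints, and check the power balance by a direct inner-product computation. Your write-up is in fact slightly more careful than the paper's in spelling out why the denominators are nonzero and where each side of the hypothesis $0 < D < \mathbf{e}^{\top}\pgmax$ is used.
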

\begin{proof}
Let $\mathbf{\tilde{p}} \, {=} \, \mathcal{P}(\pg)$, and assume $\mathbf{e}^{\top} \pg \, {<} \, D$; the case $\mathbf{e}^{\top} \pg \, {\geq} \, D$ is treated similary.
It follows that $\etaup \, {\in} \, [0, 1]$, i.e., $\mathbf{\tilde{p}}$ is a convex
combination of $\pg$ and $\pgmax$.
Thus, $\mathbf{\tilde{p}} \in
\hypercube$.  Then,
    \begin{align*}
        \mathbf{e}^{\top}\mathbf{\tilde{p}}
        &= (1 - \etaup) \mathbf{e}^{\top}\pg + \etaup \mathbf{e}^{\top}\bar{\pg}\\
        &= \etaup(\mathbf{e}^{\top}\bar{\pg} - \mathbf{e}^{\top}\pg) + \mathbf{e}^{\top} \pg\\
        &= \frac{D - \mathbf{e}^{\top} \pg}{\mathbf{e}^{\top} \mathbf{\bar{p}} - \mathbf{e}^{\top}\pg} (\mathbf{e}^{\top}\bar{\pg} - \mathbf{e}^{\top}\pg) + \mathbf{e}^{\top} \pg\\
        &= D - \mathbf{e}^{\top}\pg + \mathbf{e}^{\top} \pg = D.
    \end{align*}
    Thus, $\mathbf{\tilde{p}}$ satisfies the power balance and $\mathbf{\tilde{p}} \in \hypersimplex{D}$.
    % Similarly, assume $\mathbf{e}^{\top}\pg \geq D$.
    % Then
    % \begin{align*}
    %     \mathbf{e}^{\top}\mathbf{\tilde{p}}
    %     &= (1 - \etadn) \mathbf{e}^{\top}\pg + \etadn \mathbf{e}^{\top}\mathbf{0}\\
    %     &= (1 - \frac{\mathbf{e}^{\top}\pg - D}{\mathbf{e}^{\top} \pg}) \mathbf{e}^{\top}\pg = D,
    % \end{align*}
    % which concludes the proof.
\end{proof}

Note that feasible predictions are not modified: if $\pg \in
\hypersimplex{D}$, then $\mathcal{P}(\pg) = \pg$. The edge cases not
covered by Theorem \ref{thm:power_balance_layer} are handled as
follows.  When $D \leq 0$ (resp. $D \geq \mathbf{e}^{\top} \pgmax$),
each generator is set to its lower (resp. upper) bound; this can be
achieved by clamping $\etaup$ (resp. $\etadn$) to $[0, 1]$. 
If these inequalities are strict, Problem \eqref{eq:DCOPF} is
trivially infeasible, and $\mathcal{P}(\pg)$ is the solution that
minimizes power balance violations.

The power balance layer is illustrated in Figure
\ref{fig:feasibility_layer:hypersimplex}, for a two-generator system.
The layer has an intuitive interpretation as a proportional response
mechanism.  Indeed, if the initial dispatch $\pg$ has an energy
shortage, i.e., $\mathbf{e}^{\top} \pg \, {<} \, D$, the output of
each generator is increased by a fraction $\etaup$ of its upwards
headroom.  Likewise, if the initial dispatch has an energy surplus,
i.e., $\mathbf{e}^{\top} \pg \, {>} \, D$, the output of each
generator is decreased by a fraction $\etadn$ of it downwards
headroom.

\begin{figure}[!t]
        \centering
        \resizebox{0.48\columnwidth}{!}{
        \begin{tikzpicture}[x=2cm,y=2cm]
            % X Axis
            \draw[black,-stealth] (-0.1, 0.0) -- (1.25, 0.0);
            \node[below] at (1.25, 0.0) {\footnotesize $p_{1}$};
            % Y axis
            \draw[black,-stealth] (0.0, -0.1) -- (0.0, 1.25);
            \node[left] at (0.0, 1.25) {\footnotesize $p_{2}$};
            
            % Unit hypercube
            \draw[black] (0,0) -- (0, 1) -- (1,1) -- (1,0) -- cycle;
            
            % More visuals for radial projection
            % \draw[gray, opacity=0.2] (1,1) -- (0.00, 0.66);
            % \draw[gray, opacity=0.2] (1,1) -- (0.00, 0.33);
            % \draw[gray, opacity=0.2] (1,1) -- (0.00, 0.00);
            % \draw[gray, opacity=0.2] (1,1) -- (0.33, 0.00);
            % \draw[gray, opacity=0.2] (1,1) -- (0.66, 0.00);
            
            % Power balance constraint
            \draw[red,line width=1pt] (0.1, 1.1) -- (1.1, 0.1);
            \node[right, color=red] at (0.1, 1.2) {\footnotesize $\mathbf{e}^{\top}\pg = D$};
            
            % Max output
            \node[draw=blue, fill=blue, inner sep=0pt,minimum size=3pt] (p_bar) at (1.00, 1.00) {};
            % \node[above right] at (1,1) {\tiny $(1, 1)$};
            % Prediction
            \node[diamond,draw=blue, fill=blue, inner sep=0pt,minimum size=3pt] (p_hat) at (0.30, 0.30) {};
            \node[left] at (0.3, 0.3) {\footnotesize $\pg$};
            % Feasible point
            \node[circle,draw=blue, fill=blue, inner sep=0pt,minimum size=3pt] (p_til) at (0.6, 0.6) {};
            \node[right] at (0.65, 0.6) {\footnotesize $\mathbf{{\tilde{p}}}$};
            
            % Proportional response visual
            \draw[black,-stealth] (p_hat) -- (p_til);
            \draw[black,densely dotted] (p_til) -- (p_bar);
        \end{tikzpicture}}
        \hfill
        \resizebox{0.48\columnwidth}{!}{
        \begin{tikzpicture}[x=2cm,y=2cm]
            % X Axis
            \draw[black,-stealth] (-0.1, 0.0) -- (1.25, 0.0);
            \node[below] at (1.25, 0.0) {\footnotesize $p_{1}$};
            % Y axis
            \draw[black,-stealth] (0.0, -0.1) -- (0.0, 1.25);
            \node[left] at (0.0, 1.25) {\footnotesize $p_{2}$};
            
            % Unit hypercube
            \draw[black] (0,0) -- (0, 1) -- (1,1) -- (1,0) -- cycle;
            
            % More visuals for radial projection
            % \draw[gray, opacity=0.2] (0,0) -- (1.00, 0.66);
            % \draw[gray, opacity=0.2] (0,0) -- (1.00, 0.33);
            % \draw[gray, opacity=0.2] (0,0) -- (1.00, 1.00);
            % \draw[gray, opacity=0.2] (0,0) -- (0.33, 1.00);
            % \draw[gray, opacity=0.2] (0,0) -- (0.66, 1.00);
            
            % Power balance constraint
            \draw[red,line width=1pt] (0.1, 1.1) -- (1.1, 0.1);
            \node[right, color=red] at (0.1, 1.2) {\footnotesize $\mathbf{e}^{\top}\pg = D$};
            
            % Min output
            \node[draw=blue, fill=blue, inner sep=0pt,minimum size=3pt] (p_bar) at (0.00, 0.00) {};
            % \node[above left] at (0,0) {\footnotesize $(0,0)$};
            % Prediction
            \node[diamond,draw=blue, fill=blue, inner sep=0pt,minimum size=3pt] (p_hat) at (0.80, 0.80) {};
            \node[right] at (0.8, 0.8) {\footnotesize $\pg$};
            % Feasible point
            \node[circle,draw=blue, fill=blue, inner sep=0pt,minimum size=3pt] (p_til) at (0.6, 0.6) {};
            \node[left] at (0.58, 0.58) {\footnotesize $\mathbf{{\tilde{p}}}$};
            
            % Proportional response visual
            \draw[black,-stealth] (p_hat) -- (p_til);
            \draw[black,densely dotted] (p_til) -- (p_bar);
        \end{tikzpicture}}
        \hfill\\
        \caption{%
            Illustration of the power balance layer with input $\pg$ and output $\mathbf{\tilde{p}}$.
            Left: $\mathbf{e}^{\top}\pg \, {<} \, D$ (energy shortage) and generators' dispatches are increased.
            Right: $\mathbf{e}^{\top}\pg \, {>} \, D$ (energy surplus) and generators' dispatches are decreased.
        }
        \label{fig:feasibility_layer:hypersimplex}
\end{figure}
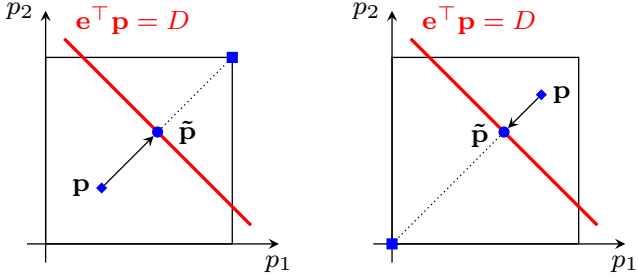

Note that $\hypersimplex{D}$ is defined by the combination of bound
constraints and one equality constraint.  Other works such as
\cite{donti2021dc3,Li2022_LOOP-LC} handle the latter via equality
completion.  While this approach satisfies the equality constraint by
design, the recovered solution is not guaranteed to satisfy min/max
bounds, and may fail to do so in general.  In contrast, under the only
assumption that $\pg \in \hypercube$, the proposed layer
\eqref{eq:power_balance_layer} jointly enforces \emph{both}
constraints, thus alleviating the need for the gradient unrolling of
\cite{donti2021dc3} or gauge mapping of \cite{Li2022_LOOP-LC}.
Finally, the proposed layer generalizes to hypersimplices of the form
    \begin{align}
        \label{eq:power_balance_layer:hypersimplex:general}
        \left\{
            x \in \mathbb{R}^{n}
        \ \middle| \ 
            l \leq x \leq u, a^{T}x = b
        \right\},
    \end{align}
    where $a \in \mathbb{R}^{n}, b \in \mathbb{R}$ and $l \leq u \in \mathbb{R}^{n}$ are finite bounds.

\subsection{The Reserve Repair Layer}
\label{sec:layers:reserve}

This section presents the proposed reserve feasibility layer, which
ensures feasibility with respect to constraints
\eqref{eq:DCOPF:reserve_requirements}, \eqref{eq:DCOPF:eco_max}, and
\eqref{eq:DCOPF:reserve_bounds}.  The approach first builds a compact
representation of these constraints by projecting out the reserve
variables $\res$.  This makes it possible to consider only the $\pg$
variables, which in turn enables a computationally efficient and
interpretable feasibility restoration. Let $\pg \in \hypersimplex{D}$
be fixed, and consider the problem of maximizing total reserves, which
reads
    \begin{subequations}
    \label{eq:ReserveRecovery}
    \begin{align}
        % \mathcal{R}(\pg) \quad 
        \max_{\mathbf{r}} \quad & \mathbf{e}^{\top} \mathbf{r} \label{eq:ReserveRecovery:obj}\\
        \text{s.t.} \quad & \mathbf{r} \leq \bar{\pg} - \pg, \label{eq:ReserveRecovery:eco_max}\\
        & 0 \leq \mathbf{r} \leq \bar{\mathbf{r}}. \label{eq:ReserveRecovery:reserve_bounds}
    \end{align}
    \end{subequations}
    Since $\pg$ is fixed, constraints \eqref{eq:ReserveRecovery:eco_max}--\eqref{eq:ReserveRecovery:reserve_bounds} reduce to simple variable bounds on the $\mathbf{r}$ variables.
    It then immediately follows that the optimal solution to Problem \eqref{eq:ReserveRecovery} is given by
    \begin{align}
        \label{eq:reserve_recovery:optimal_reserves}
        r^{*}_{g} = \min\{\bar{r}_{g}, \bar{p}_{g} - p_{g} \}, \ \forall g.
    \end{align}
    This observation is used to project out the reserve variables as stated in Lemma \ref{lem:reserve_layer:compact_representation} below.

\begin{lemma}
\label{lem:reserve_layer:compact_representation}
Let $\pg \in \hypersimplex{D}$. There exists reserves $\mathbf{r}$ such that $(\pg, \mathbf{r})$ is feasible for Problem \eqref{eq:DCOPF} if and only if
\begin{align}
\label{eq:reserves:sufficient}
\sum_{g} \min\{\bar{r}_{g}, \bar{p}_{g} - p_{g} \} \geq R.
\end{align}
\end{lemma}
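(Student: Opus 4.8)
The plan is to prove the two directions of the equivalence by exploiting the reserve-maximization problem \eqref{eq:ReserveRecovery} together with its closed-form optimum \eqref{eq:reserve_recovery:optimal_reserves}. The first observation is that, because $\pg \in \hypersimplex{D}$ is held fixed, it already satisfies the power balance \eqref{eq:DCOPF:power_balance} and the dispatch bounds \eqref{eq:DCOPF:dispatch_bounds}, so these constraints play no role in the argument. Moreover, the thermal constraints \eqref{eq:DCOPF:PTDF} are soft: for any fixed $\pg$ one can always pick $\xith \geq \mathbf{0}$ componentwise large enough to satisfy \eqref{eq:DCOPF:PTDF} and \eqref{eq:DCOPF:thermal_slack_positive} simultaneously, so they never obstruct feasibility. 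Consequently, deciding whether $(\pg, \res)$ extends to a feasible point of \eqref{eq:DCOPF} reduces to deciding whether there exists $\res$ satisfying, at once, the reserve requirement \eqref{eq:DCOPF:reserve_requirements}, the coupling bound \eqref{eq:DCOPF:eco_max}, and the reserve bounds \eqref{eq:DCOPF:reserve_bounds}.

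The key step is to recognize that constraints \eqref{eq:DCOPF:eco_max} and \eqref{eq:DCOPF:reserve_bounds} define exactly the feasible region of Problem \eqref{eq:ReserveRecovery}, which is nonempty since it contains $\res = \mathbf{0}$ (using $\pg \leq \pgmax$ and $\resmax \geq \mathbf{0}$). Over this region, the largest attainable value of $\mathbf{e}^{\top} \res$ is, by \eqref{eq:reserve_recovery:optimal_reserves}, precisely $\sum_{g} \min\{\bar{r}_{g}, \bar{p}_{g} - p_{g}\}$. Since the reserve requirement \eqref{eq:DCOPF:reserve_requirements} asks for $\mathbf{e}^{\top} \res \geq R$, a feasible $\res$ exists if and only if this maximum is at least $R$, which is exactly condition \eqref{eq:reserves:sufficient}.

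Concretely, I would argue both directions directly. For the ``if'' direction, assuming \eqref{eq:reserves:sufficient}, I take $\res = \mathbf{r}^{*}$ with $r^{*}_{g} = \min\{\bar{r}_{g}, \bar{p}_{g} - p_{g}\}$; this choice satisfies \eqref{eq:DCOPF:eco_max} and \eqref{eq:DCOPF:reserve_bounds} by construction, is nonnegative because $p_{g} \leq \bar{p}_{g}$ and $\bar{r}_{g} \geq 0$, and meets \eqref{eq:DCOPF:reserve_requirements} since $\mathbf{e}^{\top} \mathbf{r}^{*} = \sum_{g} \min\{\bar{r}_{g}, \bar{p}_{g} - p_{g}\} \geq R$. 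For the ``only if'' direction, any feasible $\res$ satisfies the bounds \eqref{eq:DCOPF:eco_max}--\eqref{eq:DCOPF:reserve_bounds}, hence is feasible for Problem \eqref{eq:ReserveRecovery}, so that $R \leq \mathbf{e}^{\top} \res \leq \mathbf{e}^{\top} \mathbf{r}^{*} = \sum_{g} \min\{\bar{r}_{g}, \bar{p}_{g} - p_{g}\}$, which yields \eqref{eq:reserves:sufficient}.

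The argument is essentially a packaging of this optimization-based reduction, so I do not anticipate a genuine obstacle. The only points requiring care are the soft-constraint observation -- verifying that the thermal slacks $\xith$ can always absorb any realized power flow, so that \eqref{eq:DCOPF:PTDF} never restricts the existence of $\res$ -- together with checking that $\mathbf{r}^{*}$ is itself nonnegative, which relies on the standing assumptions $\pg \leq \pgmax$ (inherited from $\pg \in \hypersimplex{D}$) and $\bar{r}_{g} \geq 0$.
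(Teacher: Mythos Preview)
Your proposal is correct and follows essentially the same approach as the paper: both reduce the existence of a feasible $\res$ to whether the optimal value of the reserve-maximization problem \eqref{eq:ReserveRecovery} is at least $R$, and then invoke the closed-form optimum \eqref{eq:reserve_recovery:optimal_reserves}. Your write-up is in fact more careful than the paper's terse proof, as you explicitly dispose of the soft thermal constraints and verify the nonnegativity of $\mathbf{r}^{*}$.
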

\begin{proof}
The proof follows from the fact that $(\pg, \mathbf{r})$ is feasible
for Problem \eqref{eq:DCOPF} if and only if Problem
\eqref{eq:ReserveRecovery} has an objective value not smaller than
$R$. Substituting the optimal solution given in
Eq. \eqref{eq:reserve_recovery:optimal_reserves}, this last statement
is exactly equivalent to $\sum_{g} \min\{\bar{r}_{g}, \bar{p}_{g} -
p_{g} \} \geq R$.
\end{proof}
    
The proposed reserve repair layer builds on the power balance repair
layer of Section \ref{sec:layers:power_balance}, and on the compact
formulation of Eq. \eqref{eq:reserves:sufficient}.  Namely, it takes
as input $\pg \, {\in} \, \hypersimplex{D}$, and outputs
$\mathcal{R}(\pg) \, {\in} \, \hypersimplex{D}$ that satisfies
Eq. \eqref{eq:reserves:sufficient}.  Given $\mathcal{R}(\pg)$, reserve
variables can be recovered in $O(|\mathcal{G}|)$ time using
Eq. \eqref{eq:reserve_recovery:optimal_reserves}.

The reserve repair layer is presented in Algorithm
\ref{alg:FFR:reserves}.  First, a tentative reserve allocation is
computed using Eq. \ref{eq:reserve_recovery:optimal_reserves}, and the
corresponding reserve shortage $\Delta_{R}$ is computed.  Then,
generators are split into two groups $G^{\uparrow}$ and
$\mathcal{G}^{\downarrow}$.  Generators in $\mathcal{G}^{\uparrow}$
are those for which constraint \eqref{eq:DCOPF:reserve_bounds} is
active: their dispatch can be increased without having to reduce their
reserves.  Generators in $\mathcal{G}^{\downarrow}$ are those for
which constraint \eqref{eq:DCOPF:eco_max} is active: one must reduce
their energy dispatch to increase their reserves.  Then, the algorithm
computes the maximum possible increase ($\Delta^{\uparrow}$) and
decrease ($\Delta^{\downarrow})$ in energy dispatch for the two
groups.  Finally, each generator energy dispatch is increased
(resp. decreased) proportionately to its increase (resp. decrease)
potential so as to meet total reserve requirements.  The total
increase in energy dispatch is equal to the total decrease, so that
power balance is always maintained.
    
\begin{algorithm}[!t]
        \caption{Reserve Repair Layer}
        \label{alg:FFR:reserves}
        \begin{algorithmic}[1]
            \REQUIRE Initial prediction $\pg \in \hypersimplex{D}$, maximum limits $\mathbf{\bar{p}}, \mathbf{\bar{r}}$, reserve requirement $R$
            \STATE $\Delta_{R} \gets R - \sum_{g} \min \{ \bar{r}_{g}, \bar{p}_{g} - p_{g} \}$
            \STATE $\mathcal{G}^{\uparrow} \gets \left\{ g \ \middle| \ p_{g} \leq \bar{p}_{g} - \bar{r}_{g} \right\}$
            \STATE $\mathcal{G}^{\downarrow} \gets \left\{ g \ \middle| \  p_{g} > \bar{p}_{g} - \bar{r}_{g} \right\}$
            \STATE $\Delta^{\uparrow} \gets \sum_{g \in \mathcal{G}^{\uparrow}} (\bar{p}_{g} - \bar{r}_{g}) - p_{g}$
            \STATE $\Delta^{\downarrow} \gets \sum_{g \in \mathcal{G}^{\downarrow}} p_{g} - (\bar{p}_{g} - \bar{r}_{g})$
            \STATE $\Delta \gets \max(0, \min(\Delta_{R}, \Delta^{\uparrow}, \Delta^{\downarrow}))$
            \STATE $\alpha^{\uparrow} \gets \Delta / \Delta^{\uparrow}$, $\alpha^{\downarrow} \gets \Delta / \Delta^{\downarrow}$, \label{alg:FFR:proportional_response_weigths}
            \STATE Energy dispatch adjustment
                \begin{align*}
                    \tilde{p}_{g} &= \left\{
                        \begin{array}{ll}
                            (1 - \alpha^{\uparrow}) p_{g} + \alpha^{\uparrow} (\bar{p}_{g} - \bar{r}_{g}) & \forall g \in \mathcal{G}^{\uparrow} \\
                            (1 - \alpha^{\downarrow}) p_{g} + \alpha^{\downarrow} (\bar{p}_{g} - \bar{r}_{g}) & \forall g \in \mathcal{G}^{\downarrow}
                        \end{array}
                    \right.
                \end{align*}
                \label{alg:FFR:proportional_response}
            \STATE \textbf{return} $\mathcal{R}(\pg) = \mathbf{\tilde{p}}$
        \end{algorithmic}
    \end{algorithm}

The reserve feasibility recovery is illustrated in Figure
\ref{fig:feasibility_recovery:reserve} for a two-generator system.
While it is easy to verify that $\mathcal{R}(\pg) \in
\hypersimplex{D}$, it is less clear whether $\mathcal{R}(\pg)$
satisfies Eq. \eqref{eq:reserves:sufficient}.  Theorem
\ref{thm:reserve_layer:guaranteed_feasibility} provides the
theoretical guarantee that either $\mathcal{R}(\pg)$ satisfies
Eq. \eqref{eq:reserves:sufficient}, or no feasible solution to Problem
\eqref{eq:DCOPF} exists.  Because of space limitations, the proof is
given in \cite{Chen2023_E2ELR_arxiv}.

\begin{theorem}
        \label{thm:reserve_layer:guaranteed_feasibility}
        Let $\pg \in \hypersimplex{D}$.
        Then, $\mathcal{R}(\pg) \in \hypersimplex{D}$.
        Furthermore, $\mathcal{R}(\pg)$ satisfies Eq. \eqref{eq:reserves:sufficient} if and only if Problem \eqref{eq:DCOPF} is feasible.
\end{theorem}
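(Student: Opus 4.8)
The plan is to prove the two claims separately: first that $\mathcal{R}(\pg) \in \hypersimplex{D}$, and then that Eq.~\eqref{eq:reserves:sufficient} holds at the output if and only if Problem~\eqref{eq:DCOPF} is feasible. For the membership claim, the starting observations are that $\Delta^{\uparrow}$ and $\Delta^{\downarrow}$ are sums of nonnegative terms (by the definitions of $\mathcal{G}^{\uparrow}$ and $\mathcal{G}^{\downarrow}$), so $0 \leq \Delta \leq \min(\Delta^{\uparrow}, \Delta^{\downarrow})$ and hence $\alpha^{\uparrow}, \alpha^{\downarrow} \in [0,1]$; the degenerate cases $\Delta^{\uparrow} = 0$ or $\Delta^{\downarrow} = 0$ force $\Delta = 0$ and leave the corresponding generators unchanged, so they are covered by the convention $\alpha = 0$. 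Each $\tilde{p}_{g}$ is then a convex combination of $p_{g}$ and $\bar{p}_{g} - \bar{r}_{g}$, both of which lie in $[0, \bar{p}_{g}]$ because $\bar{r}_{g} \leq \bar{p}_{g}$; this yields the bound constraints. For power balance, I would sum the adjustments over each group: the total increase on $\mathcal{G}^{\uparrow}$ equals $\alpha^{\uparrow}\Delta^{\uparrow} = \Delta$ and the total decrease on $\mathcal{G}^{\downarrow}$ equals $\alpha^{\downarrow}\Delta^{\downarrow} = \Delta$, so $\mathbf{e}^{\top}\mathcal{R}(\pg) = \mathbf{e}^{\top}\pg = D$.

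For the equivalence, I would work with the total reserve capability $\rho(\pg) := \sum_{g} \min\{\bar{r}_{g}, \bar{p}_{g} - p_{g}\}$. By Lemma~\ref{lem:reserve_layer:compact_representation}, and because the thermal constraints are soft (hence always satisfiable via $\xith$), Problem~\eqref{eq:DCOPF} is feasible if and only if $\rho_{\max} := \max_{\pg' \in \hypersimplex{D}} \rho(\pg') \geq R$. I would then track how $\mathcal{R}$ affects $\rho$: generators in $\mathcal{G}^{\uparrow}$ satisfy $\bar{p}_{g} - \tilde{p}_{g} \geq \bar{r}_{g}$, so their contribution remains $\bar{r}_{g}$, whereas generators in $\mathcal{G}^{\downarrow}$ satisfy $\bar{p}_{g} - \tilde{p}_{g} \leq \bar{r}_{g}$, so their contribution is $\bar{p}_{g} - \tilde{p}_{g}$; summing the resulting increase over $\mathcal{G}^{\downarrow}$ gives $\alpha^{\downarrow}\Delta^{\downarrow} = \Delta$, hence $\rho(\mathcal{R}(\pg)) = \rho(\pg) + \Delta$. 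The two key algebraic identities, obtained by direct substitution and telescoping over each group, are $\rho(\pg) + \Delta^{\uparrow} = \mathbf{e}^{\top}\pgmax - D$ and $\rho(\pg) + \Delta^{\downarrow} = \mathbf{e}^{\top}\resmax$.

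The main obstacle, and the conceptual heart of the argument, is showing that the minimum of these two quantities is exactly $\rho_{\max}$, i.e.\ $\rho_{\max} = \min(\mathbf{e}^{\top}\pgmax - D,\, \mathbf{e}^{\top}\resmax)$. I would establish this by a water-filling argument: reparametrizing by the headroom $s_{g} := \bar{p}_{g} - p_{g}$ turns the optimization into maximizing the separable, saturating objective $\sum_{g}\min\{\bar{r}_{g}, s_{g}\}$ over the box $0 \leq s_{g} \leq \bar{p}_{g}$ intersected with the hyperplane $\sum_{g} s_{g} = \mathbf{e}^{\top}\pgmax - D$. Each summand is bounded above by both $\bar{r}_{g}$ and $s_{g}$, giving the upper bound $\min(\mathbf{e}^{\top}\pgmax - D,\, \mathbf{e}^{\top}\resmax)$; achievability follows by filling the $s_{g}$ up to $\bar{r}_{g}$ (using $\bar{r}_{g}\leq\bar{p}_{g}$ and $D \geq 0$ to guarantee the budget is feasible). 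Combining this characterization with the two identities yields $\rho(\pg) + \min(\Delta^{\uparrow}, \Delta^{\downarrow}) = \rho_{\max}$.

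Finally, I would conclude by cases on $\Delta_{R}$. When $\Delta_{R} \leq 0$, the input already satisfies Eq.~\eqref{eq:reserves:sufficient}, $\Delta = 0$, and $\rho_{\max} \geq \rho(\pg) \geq R$, so both sides of the equivalence hold. When $\Delta_{R} > 0$, substituting $\rho(\pg) = R - \Delta_{R}$ and distributing the minimum gives $\rho(\mathcal{R}(\pg)) = \rho(\pg) + \min(\Delta_{R}, \Delta^{\uparrow}, \Delta^{\downarrow}) = \min(R, \rho_{\max})$; therefore $\rho(\mathcal{R}(\pg)) \geq R$ if and only if $\rho_{\max} \geq R$, which by the reduction above is exactly feasibility of Problem~\eqref{eq:DCOPF}. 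I expect the bookkeeping in the two algebraic identities and the water-filling optimality argument to be the only nonroutine steps; everything else reduces to convex-combination bounds and a telescoping sum.
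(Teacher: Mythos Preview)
Your proof is correct. The membership part ($\mathcal{R}(\pg) \in \hypersimplex{D}$) is essentially identical to the paper's: both use that $\tilde{p}_{g}$ is a convex combination of $p_{g}$ and $\bar{p}_{g}-\bar{r}_{g}$ for the bounds, and the $+\Delta/-\Delta$ telescoping over $\mathcal{G}^{\uparrow}$ and $\mathcal{G}^{\downarrow}$ for power balance.

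For the equivalence, however, you take a genuinely different and more structural route. The paper argues by contrapositive: assuming $\mathcal{R}(\pg)$ violates Eq.~\eqref{eq:reserves:sufficient}, it notes that $\Delta \neq \Delta_{R}$, hence $\Delta^{\uparrow} < \Delta_{R}$ or $\Delta^{\downarrow} < \Delta_{R}$, and then manipulates each inequality directly into an infeasibility certificate for Problem~\eqref{eq:DCOPF} (namely $\mathbf{e}^{\top}\pgmax - D < R$ in the first case and $\mathbf{e}^{\top}\resmax < R$ in the second). You instead compute $\rho(\mathcal{R}(\pg))$ exactly, via the two identities $\rho(\pg)+\Delta^{\uparrow}=\mathbf{e}^{\top}\pgmax-D$ and $\rho(\pg)+\Delta^{\downarrow}=\mathbf{e}^{\top}\resmax$, together with an independent characterization $\rho_{\max}=\min(\mathbf{e}^{\top}\pgmax-D,\,\mathbf{e}^{\top}\resmax)$ obtained by a water-filling argument. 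The same two thresholds appear in both proofs, but the paper uses them only as certificates of infeasibility, whereas you identify them as the actual value of $\rho_{\max}$. Your approach costs an extra constructive step (the water-filling achievability), but it yields the sharper statement $\rho(\mathcal{R}(\pg))=\min(R,\rho_{\max})$, i.e., the layer always attains the maximum achievable reserve capped at $R$; the paper's direct contrapositive is shorter but does not expose this optimality.
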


Theorem \ref{thm:reserve_layer:guaranteed_feasibility} also provides a
fast proof of (in)feasibility for Problem \eqref{eq:DCOPF}: it
suffices to evaluate $\mathcal{R}(\pg)$ for any $\pg \in
\hypersimplex{D}$ and check Eq. \eqref{eq:reserves:sufficient}.  This
can have applications beyond optimization proxies, e.g., to quickly
evaluate a large number of scenarios for potential reserve violations.
\revision{}{Finally, note that the results of Theorems \ref{thm:power_balance_layer} and \ref{thm:reserve_layer:guaranteed_feasibility} hold for any value of the maximum limits $\pgmax, \bar{\mathbf{r}}$ and reserve requirement $R$.}
    
    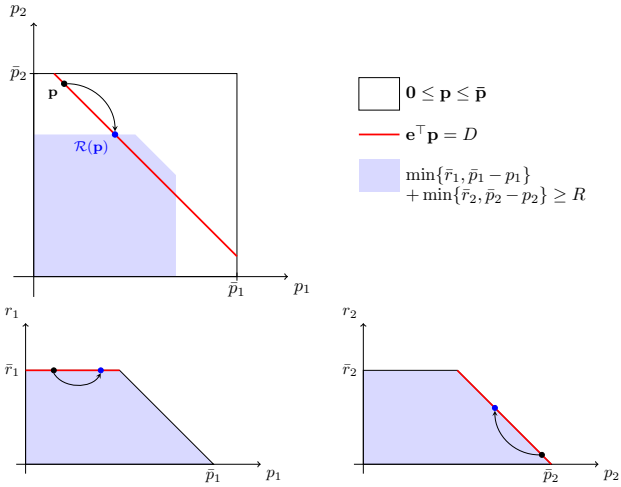
\begin{figure}[!t]
        \centering
        \resizebox{0.9\columnwidth}{!}{
        \begin{tikzpicture}[x=4cm,y=4cm]
            % X Axis
            \draw[black,->] (-0.1, 0.0) -- (1.25, 0.0);
            \node[below right] at (1.25, 0.0) {$p_{1}$};
                % EcoMin & EcoMax
                % \draw[black] (0, -0.27) -- (0, -0.23);
                % \node[below] at (0, -0.25) {$\ubar{p}_{1}$};
                \draw[black] (1, -0.02) -- (1, 0.02);
                \node[below] at (1, 0.0) {$\bar{p}_{1}$};
                % Reserve capacity
                % \draw[black] (0.6, -0.27) -- (0.6, -0.23);
                % \draw[black,<->] (0.6, -0.4) -- (1.0, -0.4);
                % \node[below] at (0.8, -0.4) {$\bar{r}_{1}$};
            
            % Y axis
            \draw[black,->] (0.0, -0.1) -- (0.0, 1.25);
            \node[above left] at (0.0, 1.25) {$p_{2}$};
                % EcoMin & EcoMax
                % \draw[black] (-0.02, 0) -- (0.02, 0);
                % \node[left] at (0, 0) {$\ubar{p}_{2}$};
                \draw[black] (-0.02, 1) -- (+0.02, 1);
                \node[left] at (0, 1) {$\bar{p}_{2}$};
                % Ecomax - reserve
                % \draw[black] (-0.27, 0.7) -- (-0.23, 0.7);
                % \draw[black,<->] (-0.4, 0.7) -- (-0.4, 1.0);
                % \node[left] at (-0.4, 0.85) {$\bar{r}_{2}$};
                
            % Reserve feasibility domain
            \fill[blue!50!white, fill opacity=0.3] (0,0) -- (0,0.7) -- (0.5, 0.7) -- (0.7, 0.5) -- (0.7, 0.0) -- cycle;
            
            % Domain of p1, p2
            \draw[black] (0,0) -- (0, 1) -- (1,1) -- (1,0) -- cycle;
            
            % Power balance constraint
            \draw[red,line width=1pt] (0.1, 1.0) -- (1.0, 0.1);
            % \draw[red,line width=1pt] (0.4, 0.8) -- (0.7, 0.5);
            
            % Infeasible prediction
            \node[circle,draw=black, fill=black, inner sep=0pt,minimum size=3pt] (phat_a) at (0.15, 0.95) {};
            \node[below left] at (0.15, 0.95) {\footnotesize $\pg$};
            % \node[circle,draw=black, fill=black, inner sep=0pt,minimum size=3pt] (phat_b) at (0.6, 0.6) {};
            % \node[circle,draw=black, fill=black, inner sep=0pt,minimum size=3pt] (phat_c) at (0.85, 0.35) {};
            
            % Feasible points of interest
            \node[circle,draw=blue, fill=blue, inner sep=0pt,minimum size=3pt] (pfeas1) at (0.4, 0.7) {};
            \node[below left, blue] at (0.4, 0.7) {\footnotesize $\mathcal{R}(\pg)$};
            % \node[circle,draw=blue, fill=blue, inner sep=0pt,minimum size=3pt] (pfeas2) at (0.7, 0.5) {};
            
            \draw[-stealth] (phat_a.east) to [out=0,in=90] (pfeas1.north);
            % \draw[-stealth] (phat_c.north) to [out=90,in=0] (pfeas2.east);
            
            % Legend
                % Domain of pg
                \draw[black] (1.6, 0.98) -- (1.8, 0.98) -- (1.8, 0.82) -- (1.6, 0.82) -- cycle;
                \node[right] at (1.8, 0.9) {$\mathbf{0} \leq \pg \leq \mathbf{\bar{p}}$};
                % Power Balance
                \draw[red, line width=1pt] (1.6, 0.7) -- (1.8, 0.7);
                \node[right] at (1.8, 0.7) {$\mathbf{e}^{\top} \pg = D$};
                % Reserve feasibility
                \fill[blue!50!white, fill opacity=0.3] (1.6, 0.58) -- (1.8, 0.58) -- (1.8, 0.42) -- (1.6, 0.42) -- cycle;
                \node[right] at (1.8, 0.5) {$\min\{\bar{r}_{1}, \bar{p}_{1} \, {-} \, p_{1}\}$};
                \node[right] at (1.8, 0.4) {$+\min\{\bar{r}_{2}, \bar{p}_{2} \, {-} \, p_{2}\} \geq R$};
        \end{tikzpicture}
        }\\
        \hfill
        \resizebox{0.45\columnwidth}{!}{
            \begin{tikzpicture}[x=4cm,y=4cm]
                % X Axis
                \draw[black,->] (-0.05, 0) -- (1.25, 0);
                \node[below right] at (1.25, 0) {$p_{1}$};
                \node[below] at (1, 0) {$\bar{p}_{1}$};
                
                % Y axis
                \draw[black,->] (0, -0.05) -- (0, 0.75);
                \node[above left] at (0, 0.75) {$r_{1}$};
                \node[left] at (0, 0.5) {$\bar{r}_{1}$};
                
                % Domain of (p, r)
                \fill[blue!50!white, fill opacity=0.3] (0,0) -- (0, 0.5) -- (0.5, 0.5) -- (1,0) -- cycle;
                % Inactive constraints
                \draw[black] (0,0) -- (0, 0.5) -- (0.5,0.5) -- (1,0) -- cycle;
                % Active constraint (max reserve)
                \draw[red, line width=1pt] (0.0, 0.5) -- (0.5, 0.5);
                
                % Initial and feasible point
                \node[circle,draw=black, fill=black, inner sep=0pt,minimum size=3pt] (ppred) at (0.15, 0.5) {};
                \node[circle,draw=blue, fill=blue, inner sep=0pt,minimum size=3pt] (pfeas) at (0.4, 0.5) {};
                \draw[-stealth] (ppred.south) to [out=-60,in=-120] (pfeas.south);
            \end{tikzpicture}
            }
        \hfill
        \resizebox{0.45\columnwidth}{!}{
            \begin{tikzpicture}[x=4cm,y=4cm]
                % X Axis
                \draw[black,->] (-0.05, 0) -- (1.25, 0);
                \node[below right] at (1.25, 0) {$p_{2}$};
                \node[below] at (1, 0) {$\bar{p}_{2}$};
                
                % Y axis
                \draw[black,->] (0, -0.05) -- (0, 0.75);
                \node[above left] at (0, 0.75) {$r_{2}$};
                \node[left] at (0, 0.5) {$\bar{r}_{2}$};
                
                % Domain of (p, r)
                \fill[blue!50!white, fill opacity=0.3] (0,0) -- (0, 0.5) -- (0.5, 0.5) -- (1,0) -- cycle;
                % Inactive constraints
                \draw[black] (0,0) -- (0, 0.5) -- (0.5, 0.5) -- (1,0) -- cycle;
                % Active constraint
                \draw[red, line width=1pt] (0.5, 0.5) -- (1.0, 0.0);
                
                % Initial and feasible point
                \node[circle,draw=black, fill=black, inner sep=0pt,minimum size=3pt] (ppred) at (0.95, 0.05) {};
                \node[circle,draw=blue, fill=blue, inner sep=0pt,minimum size=3pt] (pfeas) at (0.7, 0.3) {};
                \draw[-stealth] (ppred.west) to [out=180,in=-90] (pfeas.south);
            \end{tikzpicture}
            }
        \caption{Illustration of the reserve feasibility layer for $\mathbf{\bar{p}} {=} (1, 1)$, $\mathbf{\bar{r}}{=}(0.5, 0.5)$, $D {=} 1.1$, $R{=}0.8$ and the initial prediction $\pg {=} (0.15, 0.95)$. The recovered feasible dispatch is $\mathbf{\tilde{p}} {=} (0.4, 0.7)$.
        Top: effect of the layer in the $(p_{1}, p_{2})$ space.
        Bottom: effect of the layer on each generator (individually). The active constraint is shown in red. Generator $1$ is in $\mathcal{G}^{\uparrow}$ and generator 2 is in $\mathcal{G}^{\downarrow}$.}
        \label{fig:feasibility_recovery:reserve}
    \end{figure}

\subsection{End-to-end Feasible Training}
\label{sec:layers:end-to-end}

The repair layers are combined with a Deep Neural Network (DNN)
architecture to provide an end-to-end feasible ML model, i.e., a
differentiable architecture that is guaranteed to output a feasible
solution to Problem \eqref{eq:DCOPF} (if and only if one exists).  The
resulting architecture is illustrated in Figure
\ref{fig:end2end:architecture_detailed}.  The proxy takes as input the
vector of nodal demand $\pd$.  The DNN architecture consists of
fully-connected layers with ReLU activation, and a final layer with
sigmoid activations to enforce bound constraints on $\pg$.  Namely,
the last layer outputs $\mathbf{z} \in [0, 1]^{n}$, and
$\mathbf{\tilde{p}} = \mathbf{z} \cdot \pgmax$ satisfies constraints
\eqref{eq:DCOPF:dispatch_bounds}.  Then, this initial prediction
$\mathbf{\tilde{p}}$ is fed to the repair layers that restore the
feasibility of the power balance and reserve requirements.  The final
prediction $\mathbf{\hat{p}}$ is feasible for Problem
\eqref{eq:DCOPF}.
    
\begin{figure}[!t]
        \centering
        \includegraphics[width=\columnwidth]{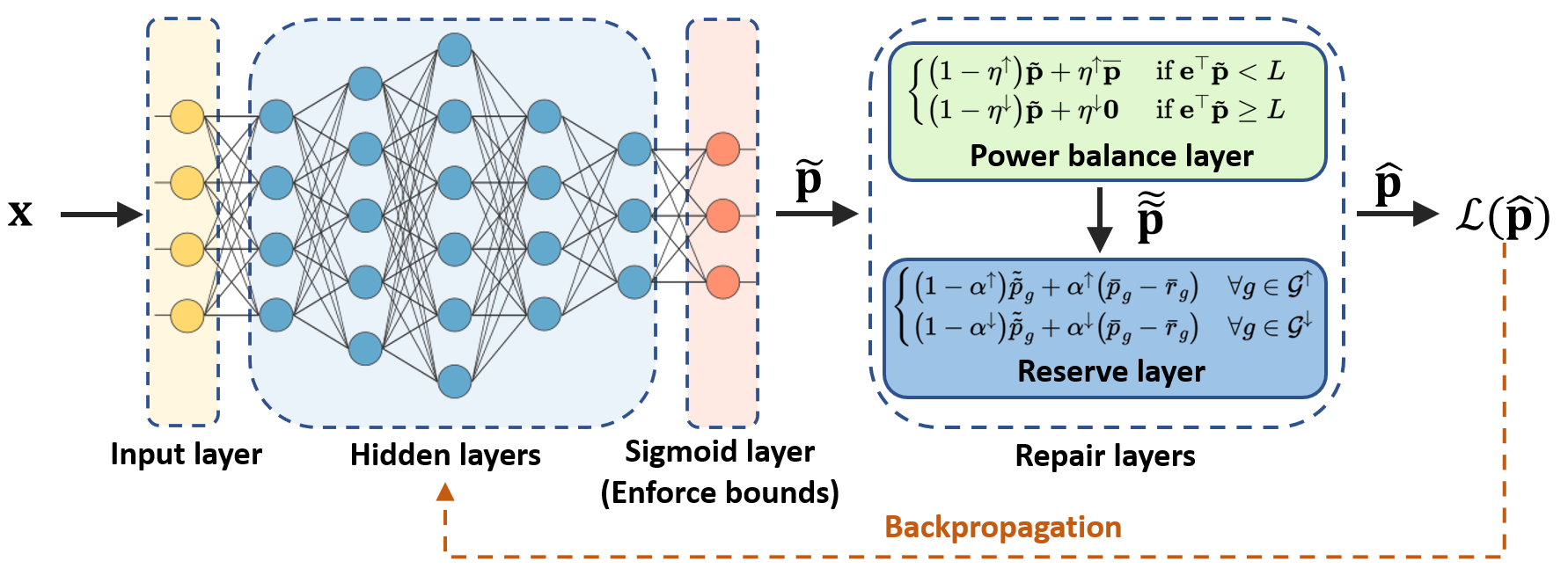}
        \caption{The Proposed End-To-End Feasible Architecture.}
        \label{fig:end2end:architecture_detailed}
    \end{figure}

The power balance and reserve feasibility layers only require
elementary arithmetic and logical operations, all of which are
supported by mainstream ML libraries like PyTorch and TensorFlow.
Therefore, it can be implemented as a layer of a generic artificial
neural network model trained with back-propagation.  Indeed, these
layers are differentiable almost everywhere with informative
(sub)gradients.  Finally, the proposed feasibility layers can be used
as a stand-alone, post-processing step to restore feasibility of any
dispatch vector that satisfies generation bounds.  This can be used
for instance to build fast heuristics with feasibility guarantees.

\section{Training methodology}
\label{sec:trainig_methodology}

This section describes the supervised learning (SL) and
self-supervised learning (SSL) approaches for training optimization
proxies, \revision{}{which are illustrated in Figure \ref{fig:training}}.
The difference between these two paradigms lies in the
choice of loss function for training, not in the model architecture.
Denote by $\x$ the input data of Problem \eqref{eq:DCOPF}, i.e.,
\begin{align*}
    \x = (c, \pd, R, \pgmax, \resmax, \Phi, \pfmin, \pfmax, \Mth),
\end{align*}
and recall, from Section \ref{sec:layers:reserve}, that it is
sufficient to predict the (optimal) value of variables $\pg$. Denote
by  $f_{\theta}$ the mapping of a DNN architecture with trainable
parameters $\theta$; given an input $\x$, $f_{\theta}(\x)$
predicts a generator dispatch \revision{}{$f_{\theta}(\x) = \hat{\pg}$}.

\begin{figure}[!t]
    \centering
    \subfloat[\revision{}{Supervised Learning: the loss function $\mathcal{L}^{SL}$ measures the distance to an optimal solution $\mathbf{p}^{*}$, which is computed offline by an optimization solver.}]{
        \includegraphics[width=0.95\columnwidth]{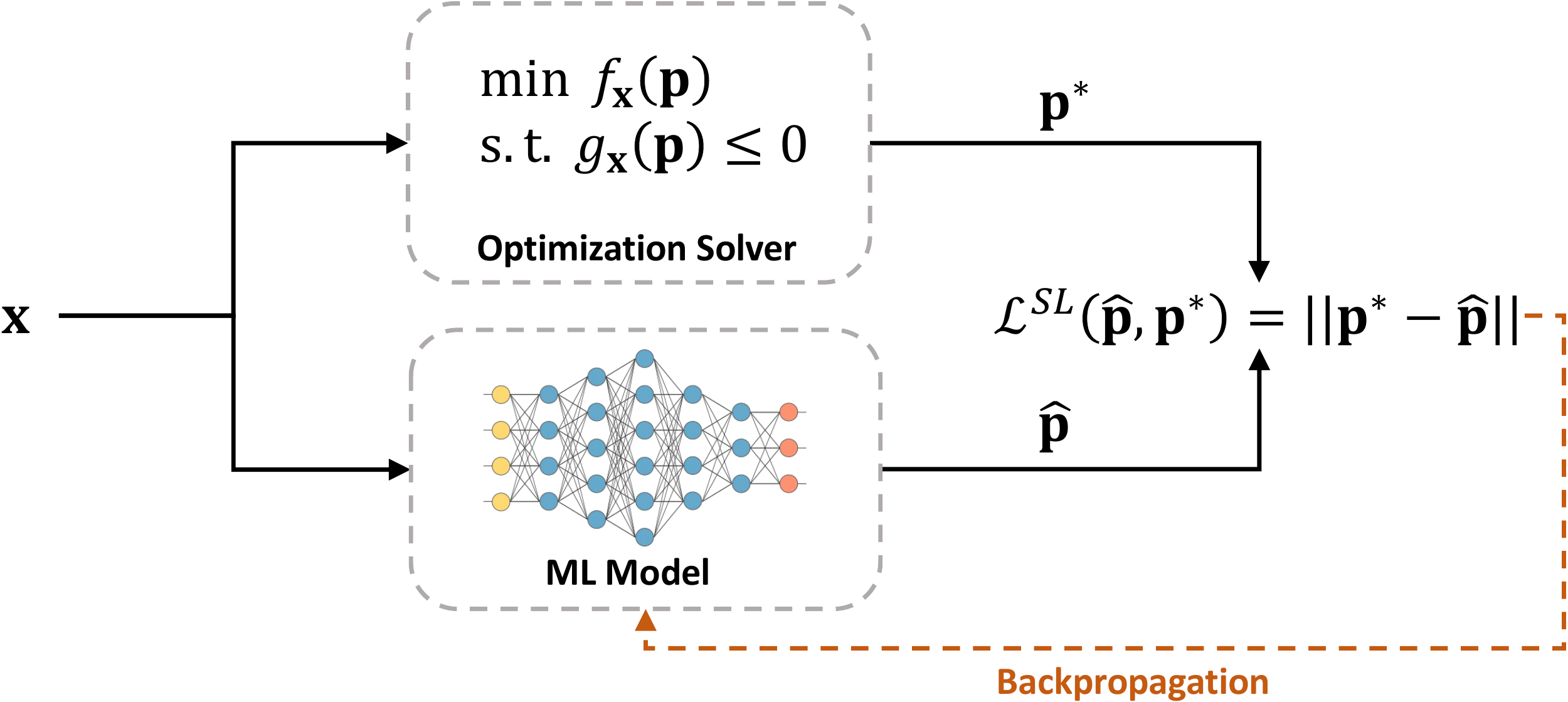}
        \label{fig:training:SL}
    }\\
    \subfloat[\revision{}{Self-Supervised Learning: the loss function $\mathcal{L}^{SSL}$ measures the objective value of the prediction. No target $\mathbf{p}^{*}$ nor optimization solver is needed.}]{
        \includegraphics[width=0.95\columnwidth]{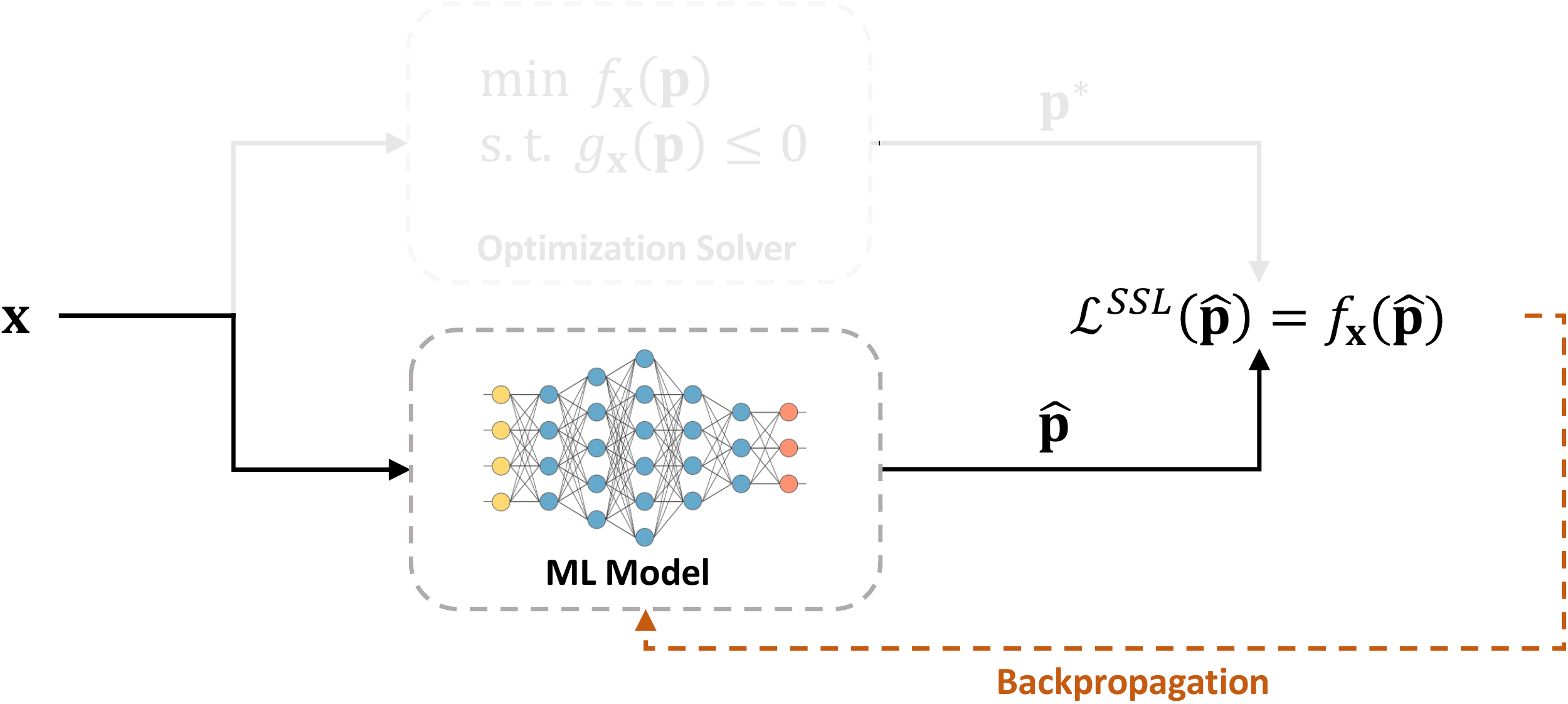}
        \label{fig:training:SSL}
    }
    \caption{\revision{}{Illustration of supervised and self-supervised learning paradigms. When the ML model is not guaranteed to produce feasible solutions, the loss function $\mathcal{L}$ can be augmented with a constraint penalization term.}}
    \label{fig:training}
\end{figure}

Consider a dataset of $N$ data points
\begin{align}
    \mathcal{D} &= \Big\{
        \big( \x^{(1)}, \pg^{(1)} \big),
        ..., 
        \big( \x^{(N)}, \pg^{(N)} \big)
    \Big\},
\end{align}
where each data point corresponds to an instance of Problem
\eqref{eq:DCOPF} and its solution, i.e., $\x^{(i)}$ and $\pg^{(i)}$
denote the input data and solution of instance $i \, {\in} \, \{1,
..., N\}$, respectively. The training of the DNN $f_{\theta}$ can be
formalized as the optimization problem
\begin{align}
    \label{eq:training}
    \theta^* = \argmin_{\theta} \quad & \frac{1}{N} \sum_{i=1}^{N} \mathcal{L} \left( \hat{\pg}^{(i)}, \pg^{(i)} \right),
\end{align}
where $\hat{\pg}^{(i)} = f_{\theta^*}(\x^{(i)})$ is the prediction for
instance $i$, and $\mathcal{L}$ denotes the loss function.
The rest of this section describes the choice of \revision{}{loss function} $\mathcal{L}$ for the SL and SSL settings.

\subsection{Supervised Learning}
\label{sec:training:supervised}

    The supervised learning loss $\mathcal{L}^{\text{SL}}$ has the form
    \begin{align}
        \label{eq:SL:loss}
        \mathcal{L}^{\text{SL}} (\hat{\pg}, \pg) &= \varphi^{\text{SL}}(\hat{\pg}, \pg) + \lambda \psi(\hat{\pg}),
    \end{align}
    where $\varphi^{\text{SL}}(\hat{\pg}, \pg)$ penalizes the distance between the predicted and the target (ground truth) solutions, and $\psi(\hat{\pg})$ penalizes constraint violations.
    The paper uses the Mean Absolute Error (MAE) on energy dispatch, i.e.,
    \begin{align}
        \label{eq:SL:loss:MAE}
        \varphi^{\text{SL}}(\hat{\pg}, \pg) &= \frac{1}{|\mathcal{G}|} \| \hat{\pg} - \pg \|_{1}.
    \end{align}
    Note that other loss functions, e.g., Mean Squared Error (MSE),
    could be used instead.  The term $\psi(\hat{\pg})$ penalizes power
    balance violations and reserve shortages as follows:
    \begin{align}
        \label{eq:SL:constraint_penalty}
        \psi(\hat{\pg}) = \Mpb |\mathbf{e}^{\top}\pd - \mathbf{e}^{\top} \hat{\pg} | + \Mres \xir(\hat{\pg}),
    \end{align}
    where $\Mpb$ and $\Mres$ are penalty coefficients, and $\xir$ denotes the reserve shortages, i.e.,
    \begin{align}
        \label{eq:SL:reserve_shortage}
        \xir(\hat{\pg}) = \max \big\{ 0, R - \sum_g \min (\bar{r}_g, \bar{p}_g - \hat{p}_g) \big\}.
    \end{align}
    The penalty term $\psi$ is set to zero for end-to-end feasible
    models. Finally, while thermal constraints are soft, preliminary
    experiments found that including thermal violations in the loss
    function yields more accurate models.
    \revision{}{This observation echoes the findings of multiple studies \cite{Ng2018_StatisticalLearningDCOPF,pan2020deepopf,fioretto2020predicting,Nellikkath2021_PINN-DCOPF,donti2021dc3}, namely, that penalizing constraint violations yields better accuracy and improves generalization.
    Indeed, the penalization of thermal violations acts as a regularizer in the loss function.}
    Therefore, the final loss
    considered in the paper is
    \begin{align}
        \label{eq:SL:loss:final}
        \mathcal{L}^{\text{SL}} (\hat{\pg}, \pg) &= \varphi^{\text{SL}}(\hat{\pg}, \pg) + \lambda \psi(\hat{\pg}) + \mu \Mth \| \xith(\hat{\pg}) \|_1,
    \end{align}
    where $\xith(\hat{\pg})$ denotes thermal violations
    (Eq. \eqref{eq:DCOPF:PTDF}).

\subsection{Self-supervised Learning}
\label{sec:training:self_supervised}

\revision{}{Self-supervised learning has been applied very recently to train optimization proxies \cite{donti2021dc3,Huang2021_DeepOPF-NGT,park2023self}.
The core aspect of SSL is that \emph{it does not require the labeled data $\mathbf{p}$}, because it directly minimizes the objective function of the original problem.
The self-supervised loss guides the training to imitate the solving of optimization instances using gradient-based algorithms, which makes it effective for optimization proxies.}

The loss function
$\mathcal{L}^{\text{SSL}}$ has the form
    \begin{align}
        \label{eq:SSL:loss}
        \revision{}{\mathcal{L}^{\text{SSL}} (\hat{\pg})} &= \varphi^{\text{SSL}}(\hat{\pg}) + \lambda \psi(\hat{\pg}),
    \end{align}
    where $\psi(\hat{\pg})$ is the same as Eq. \eqref{eq:SL:constraint_penalty}, and 
    \begin{align}
        \label{eq:SSL:objective}
        \varphi^{\text{SSL}}(\hat{\pg}) &= c(\hat{\pg}) + \Mth \xith(\hat{\pg})
    \end{align}
is the objective value of the predicted solution.
\revision{}{As mentioned above, note that $\mathcal{L}^{SLL}$ only depends on the predicted solution $\hat{\pg}$, i.e., unlike the supervised learning loss $\mathcal{L}^{SL}$ (see Eq. \eqref{eq:SL:loss:final}), it does not require any ground truth solution $\pg$.
Consequently, SSL does not require labeled data.
This, in turn, eliminates the need to solve numerous instances offline.}

\revision{}{Again, the constraint penalty term is zero when training end-to-end feasible models, because their output satisfies all hard constraints.
Thereby, the self-supervised loss is very effective since the learning focuses on optimality.}

\revision{}{For models without feasibility guarantees, the trade-off between optimality and feasibility typically makes the training very hard to stabilize for large-scale systems, which increases the learning difficulty.
Thus, such models must take care to satisfy constraints to avoid
spurious solutions.  For instance, in the ED setting, simply
minimizing total generation cost, without considering the power balance constraint \eqref{eq:DCOPF:power_balance}, yields the trivial solution $\pg \,
{=} \, \mathbf{0}$.  This highlights the importance of ensuring
feasibility, which is the core advantage of the proposed end-to-end
feasible architecture.}

\section{Experiment settings}
\label{sec:experiments}

\newcommand{\ieeeSmall}{\texttt{ieee300}}
\newcommand{\pegaseSmall}{\texttt{pegase1k}}
\newcommand{\rteLarge}{\texttt{rte6470}}
\newcommand{\pegaseLarge}{\texttt{pegase9k}}
\newcommand{\pegaseXLarge}{\texttt{pegase13k}}
\newcommand{\gocXXL}{\texttt{goc30k}}

This section presents the numerical experiments used to assess
E2ELR. The experiments are conducted on power grids with up to 30,000
buses and uses two variants of Problem \ref{eq:DCOPF} with and without
reserve requirements.  The section presents the data-generation
methodology, the baseline ML architectures, performance metrics, and
implementation details.  Additional information is in \cite{Chen2023_E2ELR_arxiv}.

\subsection{Data Generation}
\label{sec:experiment:data}

    \begin{table}[!t]
        \centering
        \caption{Selected test cases from PGLib \cite{PGLib}}
        \label{tab:PGLib}
        \begin{tabular}{lcrrrrr}
            \toprule
            System & \revision{}{Ref}
                & \multicolumn{1}{c}{$|\mathcal{N}|$}
                & \multicolumn{1}{c}{$|\mathcal{E}|$}
                & \multicolumn{1}{c}{$|\mathcal{G}|$} 
                & \multicolumn{1}{c}{${D_{\text{ref}}}^{\dagger}$}
                & \multicolumn{1}{c}{$\alpha_{\text{r}}$}
                \\
            \midrule
            \ieeeSmall      & \cite{UW_PowerSystemArchive} & 300   &	411 &	69  & 23.53    & 34.16\% \\
            \pegaseSmall    & \cite{Josz2016_ACOPF_PegaseRTE} & 1354  & 1991  & 260   & 73.06    & 19.82\% \\
            \rteLarge       & \cite{Josz2016_ACOPF_PegaseRTE} & 6470  & 9005  & 761   & 96.59    & 14.25\% \\
            \pegaseLarge    & \cite{Josz2016_ACOPF_PegaseRTE} & 9241  & 16049 & 1445  & 312.35   &  4.70\% \\
            \pegaseXLarge   & \cite{Josz2016_ACOPF_PegaseRTE} & 13659 & 20467 & 4092  & 381.43   &  1.32\% \\
            \gocXXL         & \cite{GoCompetition} & 30000 & 35393 & 3526  & 117.74   &  4.68\% \\
            \bottomrule
        \end{tabular}\\
        \footnotesize{$^{\dagger}$Total active power demand in reference PGLib case, in GW.}
    \end{table}
    
Instances of Problem \eqref{eq:DCOPF} are obtained by perturbing
reference test cases from the PGLib \cite{PGLib} library \revision{}{(v21.07)}.  Two
categories of instances are generated: instances without any reserve
requirements (ED), and instances with reserve requirements (ED-R).
The instances are generated as follows. Denote by $\pd^{\text{ref}}$
the nodal load vector from the reference PGLib case.  ED instances
are obtained by perturbing this reference load profile.
Namely, for instance $i$, $\pd^{(i)} \, {=} \, \gamma^{(i)} \times \eta^{(i)} \times \pd^{\text{ref}}$,
where $\gamma^{(i)} \, {\in} \, \mathbb{R}$ is a global scaling
factor, $\eta \, {\in} \, \mathbb{R}^{|\mathcal{N}|}$ denotes
load-level multiplicative white noise, and the multiplications are
element-wise.  For the case at hand, $\gamma$ is sampled from a
uniform distribution $U[0.8, 1.2]$,
% i.e., the total demand varies by $\pm$20\% 
and, for each load, $\eta$ is sampled from a log-normal
distribution with mean $1$ and standard deviation $5\%$.

ED-R instances are identical to the ED instances, except that
reserve requirements are set to a non-zero value.  The PGLib library
does not include reserve information, therefore, the paper assumes
$\bar{r}_{g} = \alpha_{\text{r}} \bar{p}_{g}, \forall g \in
\mathcal{G}$, where
%    \begin{align}
% \bar{r}_{g} = \alpha_{\text{t}} \bar{p}_{g}
$
        \alpha_{\text{r}} = 5 \times \| \bar{\pg} \|_{\infty} \times \| \bar{\pg} \|_{1}^{-1}.
        %    \end{align}
$        
This ensures that the total reserve capacity is 5 times larger than
the largest generator in the system.  Then, the reserve requirements
of each instance is sampled uniformly between 100\% and 200\% of the
size of the largest generator, thereby mimicking contingency reserve
requirements used in industry.

Table \ref{tab:PGLib} presents the systems used in the experiments.
The table reports: the number of buses ($|\mathcal{N}|$), the number
of branches ($|\mathcal{E}|$), the number of generators
($|\mathcal{G}|$), the total active power demand in the reference
PGLib case ($D_{\text{ref}}$, in GW), and the value of
$\alpha_{\text{r}}$ used to determine reserve capacities.  The
experiments consider test cases with up to 30,000 buses, significantly
larger than almost all previous works.  Large systems have a smaller
value of $\alpha_{\text{r}}$ because they contain significantly more
generators, whereas the size of the largest generator typically
remains in the same order of magnitude.  For every test case, $50,000$
instances are generated and solved using Gurobi.  This dataset is then
split into training, validation, and test sets which comprise $40000$,
$5000$, and $5000$ instances.

\subsection{Baseline Models}
\label{sec:experiment:baselines}

The proposed end-to-end learning and repair model (E2ELR) is evaluated
against \revision{three}{four} architectures.  First, a naive, fully-connected DNN
model without any feasibility layer (DNN).  This model only includes a
sigmoid activation layer to enforce generation bounds (constraint
\eqref{eq:DCOPF:dispatch_bounds}). 
\revision{blue}{Second, a fully-connected DNN model with the DeepOPF architecture \cite{pan2020deepopf} (DeepOPF).
It uses an equality completion to ensure the satisfaction of equality constraints; the output may violate inequality constraints.}
\revision{Second}{Third}, a fully-connected DNN
model with the DC3 architecture \cite{donti2021dc3} (DC3).  This
architecture uses a fixed-step unrolled gradient descent to minimize
constraint violations; it is however not guaranteed to reach zero
violations.  Note that the DC3 architecture requires a significant
amount of hypertuning to achieve decent results.  
\revision{Third,}{The last model is} a
fully-connected DNN model, combined with the LOOP-LC architecture from
\cite{Li2022_LOOP-LC} (LOOP).  The gauge mapping used in LOOP does not
support the compact form of Eq. \eqref{eq:reserves:sufficient}, therefore it is not included in the ED-R experiments.  These baseline
models are detailed in \cite{Chen2023_E2ELR_arxiv}.

\revision{}{All baselines use a fully-connected DNN architecture, with the main difference being how feasibility is handled.
Graph Neural Network (GNN) architectures are not considered in this work, as they were found to be numerically less stable and exhibited poorer performance than DNNs in preliminary experiments.
Nevertheless, note that the proposed repair layers can also be used in conjunction with a GNN architecture.}

\subsection{Performance Metrics}
\label{sec:experiment:metrics}

The performance of each ML model is evaluated with respect to several
metrics that measure both accuracy and computational efficiency.
Given an instance $\x$ with optimal solution $\pg^{*}$ and a predicted
solution $\hat{\pg}$, the optimality gap is defined as
$
        \text{gap} = (\hat{Z} - Z^{*}) \times | Z^{*} |^{-1},
$
    where $Z^{*}$ is the optimal value of the problem, and $\hat{Z}$ is the objective value of the prediction, plus a penalty for hard constraint violations, i.e., 
    \begin{align}
        \label{eq:experiment:objective_penalized}
        c(\hat{\pg}) + \Mth \| \xith(\hat{\pg}) \|_{1} + \Mpb |\mathbf{e}^{\top}(\hat{\pg} - \pd)| + \Mres \xir(\hat{\pg}),
    \end{align}
where $\xir(\hat{\pg})$ is defined as in function
\eqref{eq:SL:reserve_shortage}.  Penalizing hard constraint violations
is necessary to ensure a fair comparison between models that output
feasible solutions and those that do not.  Because all considered
models enforce constraints
\eqref{eq:DCOPF:eco_max}--\eqref{eq:DCOPF:reserve_bounds}, they are
not penalized in Eq. \eqref{eq:experiment:objective_penalized}.

The paper uses realistic penalty prices, based on the values used by
MISO in their operations
\cite{MISO_Schedule28_ReserveDemandCurve,MISO_Schedule28A_TranmissionDemandCurve}.
Namely, the thermal violation penalty price $\Mth$ is set to
\$1500/MW.  The power balance violation penalty $\Mpb$ is set to
\$3500/MW, which corresponds to MISO's value of lost load (VOLL).
Finally, the reserve shortage penalty $\Mres$ is set to \$1100/MW,
which is MISO's reserve shortage price. The ability of optimization
proxies to output feasible solution is measured via the proportion of
feasible predictions, which is reported as a percentage over the test
set.  The paper uses an absolute tolerance of $10^{-4}$ p.u. to decide
whether a constraint is violated; note that this is 100x larger than
the default absolute tolerance of optimization solvers.  The paper
also reports the mean constraint violation of infeasible predictions.

The paper also evaluates the computational efficiency of each ML
model and learning paradigm (SL and SSL), as well as that of the
repair layers.  Computational efficiency is measured by (i) the
training time of ML models, including the data-generation time when
applicable, and (ii) the inference time.  Note that ML models evaluate
\emph{batches} of instances, therefore, inference times are reported
per batch of 256 instances.  The performance of the repair layers
presented in Section \ref{sec:layers} is benchmarked against a
standard euclidean projection solved with state-of-the-art
optimization software.

Unless specified otherwise, average computing times are arithmetic
means; other averages are shifted geometric means
\begin{align*}
    \mu_{s}(x_{1}, ..., x_{n}) = \exp \Big( \frac{1}{n} \sum_{i} \log (x_{i} + s) \Big) - s.
\end{align*}
The paper uses a shift $s$ of 1\% for optimality gaps, and 1p.u. for
constraint violations.

\subsection{Implementation Details}
\label{sec:experiment:implementation}

All optimization problems are formulated in Julia using JuMP
\cite{Dunning2017_JuMP}, and solved with Gurobi 9.5 \cite{gurobi} with
a single CPU thread and default parameter settings.  All deep learning
models are implemented using PyTorch \cite{paszke2017automatic} and
trained using the Adam optimizer \cite{kingma2014adam}.  All models
are hyperparameter tuned using a grid search, which is detailed in
\cite{Chen2023_E2ELR_arxiv}.  For each system, the
best model is selected on the validation set and the performances on
the test set are reported.
\revision{}{During training, the learning rate is reduced by a factor of ten if the validation loss shows no improvement for a consecutive sequence of 10 epochs.
In addition, training is stopped if the validation loss does not improve for consecutive 20 epochs.}
Experiments are conducted on dual Intel
Xeon 6226@2.7GHz machines running Linux, on the PACE Phoenix cluster
\cite{PACE}.  The training of ML models is performed on Tesla
V100-PCIE GPUs with 16GBs HBM2 RAM.

\section{Numerical Results}
\label{sec:results}

\subsection{Optimality Gaps}
\label{sec:results:gaps}

Table \ref{tab:res:opt_gap} reports, for the ED and ED-R problems, the
mean optimality gap of each ML model, under the SL and SSL learning
modes. Bold entries denote the best-performing method. Recall that
LOOP is not included in ED-R experiments. {\em E2ELR systematically
  outperforms all other baselines across all settings.}  This stems
from two reasons. First, DNN, \revision{}{DeepOPF} and DC3 exhibit violations of the power
balance constraint \eqref{eq:DCOPF:power_balance}, which yields high
penalties and therefore large optimality gaps.  Statistics on power
balance violations for DNN, \revision{}{DeepOPF} and DC3 are reported in Table
\ref{tab:res:violation}. Second, LOOP's poor performance, despite
not violating any hard constraint, is because the non-convex gauge mapping
used inside the model has an adverse impact on training.
Indeed, after a few epochs of training, LOOP gets stuck
in a local optimum.

    \begin{table}[!t]
        \centering
        \caption{Mean optimality Gap (\%) on test set}
        \label{tab:res:opt_gap}
        \resizebox{\columnwidth}{!}{
        \begin{tabular}{llrrrrrrrrr}
\toprule
&
    & \multicolumn{5}{c}{ED}
    & \multicolumn{4}{c}{ED-R}\\
\cmidrule(lr){3-7}
\cmidrule(lr){8-11}
Loss & System         
        &      DNN &    E2ELR &      \revision{}{DeepOPF} & DC3 &     LOOP
        &      DNN &    E2ELR &      \revision{}{DeepOPF} & DC3 \\
\midrule
SL   & \ieeeSmall     
        &    69.55 &     \textbf{1.42} &           \revision{}{2.81} &     3.03 &    38.93
        &    75.06 &     \textbf{1.52} &           \revision{}{2.80} &     2.94\\
     & \pegaseSmall   
        &    48.77 &     \textbf{0.74} &           \revision{}{7.78} &     2.80 &    32.53
        &    47.84 &     \textbf{0.74} &           \revision{}{7.50} &     2.97\\
     & \rteLarge           
        &    55.13 &     \textbf{1.35} &          \revision{}{28.23} &     3.68 &    50.21
        &    70.57 &     \textbf{1.82} &          \revision{}{46.66} &     3.49\\
     & \pegaseLarge        
        &    76.06 &     \textbf{0.38} &          \revision{}{33.20} &     1.25 &    33.78
        &    81.19 &     \textbf{0.38} &          \revision{}{30.84} &     1.29\\
     & \pegaseXLarge       
        &    71.14 &     \textbf{0.29} &          \revision{}{64.93} &     1.79 &    52.94
        &    76.32 &     \textbf{0.28} &          \revision{}{69.23} &     1.81\\
     & \gocXXL             
        &   194.13 &     \textbf{0.46} &          \revision{}{55.91} &     2.75 &    36.49
        &   136.25 &     \textbf{0.45} &          \revision{}{41.34} &     2.35\\
\midrule
SSL  & \ieeeSmall          
        &    35.66 &     \textbf{0.74} &          \revision{}{ 2.23} &     2.51 &    37.78
        &    45.56 &     \textbf{0.78} &          \revision{}{ 2.82} &     2.80\\
     & \pegaseSmall        
        &    62.07 &     \textbf{0.63} &          \revision{}{10.83} &     2.57 &    32.20
        &    64.69 &     \textbf{0.68} &          \revision{}{ 9.83} &     2.61\\
     & \rteLarge           
        &    40.73 &     \textbf{1.30} &          \revision{}{42.28} &     2.82 &    50.20
        &    55.16 &     \textbf{1.68} &          \revision{}{48.57} &     3.04\\
     & \pegaseLarge        
        &    43.68 &     \textbf{0.32} &          \revision{}{34.33} &     0.82 &    33.76
        &    44.74 &     \textbf{0.29} &          \revision{}{42.06} &     0.93\\
     & \pegaseXLarge       
        &    57.58 &     \textbf{0.21} &          \revision{}{60.12} &     0.84 &    52.93
        &    61.28 &     \textbf{0.19} &          \revision{}{65.38} &     0.91\\
     & \gocXXL             
        &   108.91 &     \textbf{0.39} &          \revision{}{ 8.39} &     0.72 &    36.73
        &    93.91 &     \textbf{0.33} &          \revision{}{ 9.47} &     0.71\\     
\bottomrule
\end{tabular}

        }\\
    \end{table}

        \begin{table}[!t]
        \centering
        \caption{Power balance constraint violation statistics}
        \label{tab:res:violation}
        \resizebox{\columnwidth}{!}{%
            % constraint violation (abs)
\begin{tabular}{llrrrrrrrrrrrr}
\toprule
&
    & \multicolumn{6}{c}{ED}
    & \multicolumn{6}{c}{ED-R}\\
\cmidrule(lr){3-8}
\cmidrule(lr){9-14}
     &               
     & \multicolumn{2}{c}{DNN}
     & \multicolumn{2}{c}{\revision{}{DeepOPF}}
     & \multicolumn{2}{c}{DC3}
     & \multicolumn{2}{c}{DNN}
     & \multicolumn{2}{c}{\revision{}{DeepOPF}}
     & \multicolumn{2}{c}{DC3}\\
\cmidrule(lr){3-4}
\cmidrule(lr){5-6}
\cmidrule(lr){7-8}
\cmidrule(lr){9-10}
\cmidrule(lr){11-12}
\cmidrule(lr){13-14}
Loss & System        
    & \multicolumn{1}{c}{\%feas} & \multicolumn{1}{c}{viol}
    & \multicolumn{1}{c}{\%feas} & \multicolumn{1}{c}{viol}
    & \multicolumn{1}{c}{\%feas} & \multicolumn{1}{c}{viol}
    & \multicolumn{1}{c}{\%feas} & \multicolumn{1}{c}{viol}
    & \multicolumn{1}{c}{\%feas} & \multicolumn{1}{c}{viol}
    & \multicolumn{1}{c}{\%feas} & \multicolumn{1}{c}{viol}\\
\midrule
SL   & \ieeeSmall    
         &      0\% &   0.50  &    \revision{}{100\%} &   \revision{}{0.00}  &    100\% &   0.00 
         &      0\% &   0.70  &    \revision{}{100\%} &   \revision{}{0.06}  &    100\% &   0.25 \\
     & \pegaseSmall  
         &      0\% &   1.82  &     \revision{}{71\%} &   \revision{}{1.35}  &     65\% &   0.04 
         &      0\% &   1.90  &     \revision{}{70\%} &   \revision{}{1.19}  &     65\% &   0.08 \\
     & \rteLarge     
         &      0\% &   2.08  &      \revision{}{1\%} &   \revision{}{1.94}  &      9\% &   0.03 
         &      0\% &   2.26  &      \revision{}{1\%} &   \revision{}{2.81}  &      7\% &   0.01 \\
     & \pegaseLarge  
         &      0\% &   6.25  &      \revision{}{3\%} &   \revision{}{6.38}  &     29\% &   0.00 
         &      0\% &   5.91  &      \revision{}{3\%} &   \revision{}{5.99}  &     29\% &   0.00 \\
     & \pegaseXLarge 
         &      0\% &   6.63  &      \revision{}{1\%} &   \revision{}{7.08}  &     23\% &   0.00 
         &      0\% &   7.79  &      \revision{}{1\%} &   \revision{}{7.46}  &     22\% &   0.00 \\
     & \gocXXL       
         &      0\% &   3.81  &     \revision{}{45\%} &   \revision{}{1.70}  &     57\% &   0.03 
         &      0\% &   2.31  &     \revision{}{54\%} &   \revision{}{1.54}  &     69\% &   0.00 \\
\midrule
SSL  & \ieeeSmall    
         &      0\% &   0.58  &    \revision{}{100\%} &   \revision{}{0.00}  &    100\% &   0.16 
         &      0\% &   0.73  &    \revision{}{100\%} &   \revision{}{0.56}  &    100\% &   0.29 \\
     & \pegaseSmall  
         &      0\% &   2.42  &     \revision{}{63\%} &   \revision{}{2.56}  &     63\% &   0.03 
         &      0\% &   2.30  &     \revision{}{63\%} &   \revision{}{1.81}  &     39\% &   0.05 \\
     & \rteLarge     
         &      0\% &   1.90  &      \revision{}{1\%} &   \revision{}{2.64}  &      6\% &   0.11 
         &      0\% &   2.72  &      \revision{}{1\%} &   \revision{}{2.51}  &      5\% &   0.01 \\
     & \pegaseLarge  
         &      0\% &   7.22  &      \revision{}{3\%} &   \revision{}{5.46}  &     27\% &   0.05 
         &      0\% &   7.00  &      \revision{}{3\%} &   \revision{}{7.87}  &     25\% &   0.08 \\
     & \pegaseXLarge 
         &      0\% &   6.41  &      \revision{}{1\%} &   \revision{}{7.19}  &     19\% &   0.01 
         &      0\% &   6.82  &      \revision{}{1\%} &   \revision{}{7.83}  &     20\% &   0.01 \\
     & \gocXXL       
         &      0\% &   3.18  &     \revision{}{55\%} &   \revision{}{1.24}  &     52\% &   0.01 
         &      0\% &   2.61  &     \revision{}{49\%} &   \revision{}{1.20}  &     62\% &   0.02 \\
\bottomrule
\end{tabular}

        }
        \footnotesize{$^{\dagger}$with 200 gradient steps. $^{*}$geometric mean of non-zero violations, in p.u.}
        \end{table}

{\em E2ELR, when trained in a self-supervised mode, achieves the best
  performance.}  This is because SSL directly minimizes the true
objective function, rather than the surrogate supervised loss.  With
the exception of \rteLarge, the performance of E2ELR improves as the
size of the system increases, with the lowest optimality gaps achieved
on \pegaseXLarge, which has the most generators.  Note that \rteLarge
is a real system from the French transmission grid: it is more
congested than other test cases, and therefore harder to learn.

\subsection{Computing Times}
\label{sec:results:times}

Tables \ref{tab:exp:ED:train_time} and \ref{tab:exp:ED-R:train_time}
report the sampling and training times for ED and ED-R, respectively.
Each table reports the total time for data-generation, which
corresponds to the total solving time of Gurobi on a single thread.
There is no labeling time for self-supervised models.  While training
times for SL and SSL are comparable, for a given architecture, the
latter does not incur any labeling time.  The training time of DC3 is
significantly higher than other baselines because of its unrolled
gradient steps.  These results demonstrate that ML models can be
trained efficiently on large-scale systems.  Indeed, {\em the
  self-supervised E2ELR needs less than an hour of total computing
  time to achieve optimality gaps under $0.5\%$ for systems with
  thousands of buses.}

    \begin{table}[!t]
        \centering
        \caption{Sampling and training time comparison (ED)}
        \label{tab:exp:ED:train_time}
        \resizebox{\columnwidth}{!}{
        \begin{tabular}{llrrrrrrrrrrr}
            \toprule
            Loss & System & \multicolumn{1}{c}{Sample} & \multicolumn{1}{c}{DNN} & \multicolumn{1}{c}{E2ELR} & \multicolumn{1}{c}{\revision{}{DeepOPF}} &\multicolumn{1}{c}{DC3} & \multicolumn{1}{c}{LOOP}  \\
            \midrule
            SL
            & \ieeeSmall       & \qty[mode=text]{ 0.2}{\hour} & \qty[mode=text]{ 7}{\minute}  & \qty[mode=text]{37}{\minute} & \revision{}{\qty[mode=text]{31}{\minute}} & \qty[mode=text]{121}{\minute} & \qty[mode=text]{ 33}{\minute} \\\
            & \pegaseSmall     & \qty[mode=text]{ 0.7}{\hour} & \qty[mode=text]{ 8}{\minute}  & \qty[mode=text]{14}{\minute} & \revision{}{\qty[mode=text]{ 6}{\minute}} & \qty[mode=text]{ 41}{\minute} & \qty[mode=text]{ 19}{\minute} \\\
            & \rteLarge        & \qty[mode=text]{ 5.1}{\hour} & \qty[mode=text]{11}{\minute}  & \qty[mode=text]{30}{\minute} & \revision{}{\qty[mode=text]{13}{\minute}} & \qty[mode=text]{ 73}{\minute} & \qty[mode=text]{ 18}{\minute}\\
            & \pegaseLarge     & \qty[mode=text]{12.7}{\hour} & \qty[mode=text]{15}{\minute}  & \qty[mode=text]{24}{\minute} & \revision{}{\qty[mode=text]{22}{\minute}} & \qty[mode=text]{123}{\minute} & \qty[mode=text]{ 25}{\minute}\\
            & \pegaseXLarge    & \qty[mode=text]{20.6}{\hour} & \qty[mode=text]{14}{\minute}  & \qty[mode=text]{19}{\minute} & \revision{}{\qty[mode=text]{14}{\minute}} & \qty[mode=text]{126}{\minute} & \qty[mode=text]{ 19}{\minute}\\
            & \gocXXL          & \qty[mode=text]{63.4}{\hour} & \qty[mode=text]{25}{\minute}  & \qty[mode=text]{20}{\minute} & \revision{}{\qty[mode=text]{41}{\minute}} & \qty[mode=text]{108}{\minute} & \qty[mode=text]{127}{\minute}\\
            \midrule
            SSL
            & \ieeeSmall       & -- &  \qty[mode=text]{15}{\minute} & \qty[mode=text]{27}{\minute} & \revision{}{\qty[mode=text]{38}{\minute}} & \qty[mode=text]{102}{\minute} & \qty[mode=text]{27}{\minute} \\
            & \pegaseSmall     & -- &  \qty[mode=text]{ 8}{\minute} & \qty[mode=text]{15}{\minute} & \revision{}{\qty[mode=text]{11}{\minute}} & \qty[mode=text]{ 46}{\minute} & \qty[mode=text]{14}{\minute} \\
            & \rteLarge        & -- &  \qty[mode=text]{ 9}{\minute} & \qty[mode=text]{17}{\minute} & \revision{}{\qty[mode=text]{10}{\minute}} & \qty[mode=text]{ 42}{\minute} & \qty[mode=text]{15}{\minute} \\
            & \pegaseLarge     & -- &  \qty[mode=text]{18}{\minute} & \qty[mode=text]{20}{\minute} & \revision{}{\qty[mode=text]{17}{\minute}} & \qty[mode=text]{100}{\minute} & \qty[mode=text]{29}{\minute} \\
            & \pegaseXLarge    & -- &  \qty[mode=text]{17}{\minute} & \qty[mode=text]{18}{\minute} & \revision{}{\qty[mode=text]{14}{\minute}} & \qty[mode=text]{125}{\minute} & \qty[mode=text]{15}{\minute} \\
            & \gocXXL          & -- &  \qty[mode=text]{38}{\minute} & \qty[mode=text]{45}{\minute} & \revision{}{\qty[mode=text]{51}{\minute}} & \qty[mode=text]{105}{\minute} & \qty[mode=text]{60}{\minute} \\
            \bottomrule
        \end{tabular}
        }
        \footnotesize{Sampling (training) times are for 1 CPU (1 GPU). Excludes hypertuning.}
    \end{table}

    \begin{table}[!t]
        \centering
        \caption{Sampling and training time comparison (ED-R)}
        \label{tab:exp:ED-R:train_time}
        \resizebox{\columnwidth}{!}{
        \begin{tabular}{llrrrrr}
            \toprule
            Loss & System & \multicolumn{1}{c}{Sample} & \multicolumn{1}{c}{DNN} & \multicolumn{1}{c}{E2ELR} & \multicolumn{1}{c}{\revision{}{DeepOPF}} & \multicolumn{1}{c}{DC3} \\
            \midrule
            SL
            & \ieeeSmall       & \qty[mode=text]{ 0.2}{\hour} & \qty[mode=text]{12}{\minute}  & \qty[mode=text]{43}{\minute} & \revision{}{\qty[mode=text]{43}{\minute}} & \qty[mode=text]{115}{\minute} \\
            & \pegaseSmall     & \qty[mode=text]{ 0.8}{\hour} & \qty[mode=text]{14}{\minute}  & \qty[mode=text]{19}{\minute} & \revision{}{\qty[mode=text]{19}{\minute}} & \qty[mode=text]{53 }{\minute} \\
            & \rteLarge        & \qty[mode=text]{ 4.6}{\hour} & \qty[mode=text]{14}{\minute}  & \qty[mode=text]{19}{\minute} & \revision{}{\qty[mode=text]{19}{\minute}} & \qty[mode=text]{71 }{\minute} \\
            & \pegaseLarge     & \qty[mode=text]{14.0}{\hour} & \qty[mode=text]{15}{\minute}  & \qty[mode=text]{22}{\minute} & \revision{}{\qty[mode=text]{22}{\minute}} & \qty[mode=text]{123}{\minute} \\
            & \pegaseXLarge    & \qty[mode=text]{22.7}{\hour} & \qty[mode=text]{16}{\minute}  & \qty[mode=text]{27}{\minute} & \revision{}{\qty[mode=text]{27}{\minute}} & \qty[mode=text]{126}{\minute} \\
            & \gocXXL          & \qty[mode=text]{65.9}{\hour} & \qty[mode=text]{32}{\minute}  & \qty[mode=text]{39}{\minute} & \revision{}{\qty[mode=text]{38}{\minute}} & \qty[mode=text]{129}{\minute} \\
            \midrule
            SSL
            & \ieeeSmall       & -- & \qty[mode=text]{21}{\minute}  & \qty[mode=text]{37}{\minute} & \revision{}{\qty[mode=text]{37}{\minute}} & \qty[mode=text]{131}{\minute} \\
            & \pegaseSmall     & -- & \qty[mode=text]{ 6}{\minute}  & \qty[mode=text]{19}{\minute} & \revision{}{\qty[mode=text]{19}{\minute}} & \qty[mode=text]{ 67}{\minute} \\
            & \rteLarge        & -- & \qty[mode=text]{12}{\minute}  & \qty[mode=text]{21}{\minute} & \revision{}{\qty[mode=text]{21}{\minute}} & \qty[mode=text]{ 71}{\minute} \\
            & \pegaseLarge     & -- & \qty[mode=text]{20}{\minute}  & \qty[mode=text]{24}{\minute} & \revision{}{\qty[mode=text]{24}{\minute}} & \qty[mode=text]{123}{\minute} \\
            & \pegaseXLarge    & -- & \qty[mode=text]{13}{\minute}  & \qty[mode=text]{22}{\minute} & \revision{}{\qty[mode=text]{22}{\minute}} & \qty[mode=text]{125}{\minute} \\
            & \gocXXL          & -- & \qty[mode=text]{52}{\minute}  & \qty[mode=text]{53}{\minute} & \revision{}{\qty[mode=text]{53}{\minute}} & \qty[mode=text]{128}{\minute} \\
            \bottomrule
        \end{tabular}
        }\\
        \footnotesize{Sampling (training) times are for 1 CPU (1 GPU). Excludes hypertuning.}
    \end{table}

Tables \ref{tab:exp:ED:inference_time} and
\ref{tab:exp:ED-R:inference_time} report, for ED and ED-R,
respectively, the average solving time using Gurobi (GRB) and average
inference times of ML methods.  Recall that the Gurobi's solving times
are for a single instance solved on a single CPU core, whereas the ML
inference times are reported for a batch of 256 instances on a GPU.
Also note that the number of gradient steps used by DC3 to recover
feasibility is set to 200 for inference (compared to 50 for training).

On systems with more than 6,000 buses, DC3 is typically 10--30 times
slower than other baselines, again due to its unrolled gradient steps.
\revision{blue}{In contrast, DNN, DeepOPF, E2ELR, and LOOP all require in the
order of 5--10 milliseconds to evaluate a batch of 256 instances.}  For
the largest systems, this represents about 25,000 instances per second,
on a single GPU.  Solving the same volume of instances with Gurobi would require more than
a day on a single CPU.
Getting this time down to the order of seconds, thereby matching the speed of ML proxies, would require thousands of
CPUs, which comes at high financial and environmental costs.

    \begin{table}[!t]
        \centering
        \caption{Solving and inference time comparison (ED)}
        \label{tab:exp:ED:inference_time}
        \resizebox{\columnwidth}{!}{
            \begin{tabular}{llrrrrrrr}
\toprule
Loss & System         &      DNN &    E2ELR &   \revision{}{DeepOPF} &  DC3$^{\dagger}$ &   LOOP & GRB$^{*}$\\
\midrule
SL 
    & \ieeeSmall
        & \qty[mode=text]{3.4}{\ms}
        & \qty[mode=text]{4.5}{\ms}
        & \revision{}{\qty[mode=text]{4.8}{\ms}}
        & \qty[mode=text]{15.4}{\ms}
        & \qty[mode=text]{5.3}{\ms}
        & \qty[mode=text]{12.1}{\ms} \\
    & \pegaseSmall 
        & \qty[mode=text]{4.1}{\ms}
        & \qty[mode=text]{5.3}{\ms} 
        & \revision{}{\qty[mode=text]{4.3}{\ms}}
        & \qty[mode=text]{18.3}{\ms} 
        & \qty[mode=text]{5.9}{\ms} 
        & \qty[mode=text]{51.5}{\ms} \\
    & \rteLarge 
        & \qty[mode=text]{5.1}{\ms} 
        & \qty[mode=text]{6.6}{\ms} 
        & \revision{}{\qty[mode=text]{5.4}{\ms}}
        & \qty[mode=text]{35.3}{\ms} 
        & \qty[mode=text]{7.1}{\ms} 
        & \qty[mode=text]{364.4}{\ms} \\
    & \pegaseLarge 
        & \qty[mode=text]{6.0}{\ms} 
        & \qty[mode=text]{7.3}{\ms} 
        & \revision{}{\qty[mode=text]{6.2}{\ms}}
        & \qty[mode=text]{91.5}{\ms} 
        & \qty[mode=text]{8.2}{\ms} 
        & \qty[mode=text]{913.5}{\ms} \\
    & \pegaseXLarge 
        & \qty[mode=text]{7.3}{\ms} 
        & \qty[mode=text]{8.3}{\ms} 
        & \revision{}{\qty[mode=text]{8.7}{\ms}}
        & \qty[mode=text]{523.6}{\ms} 
        & \qty[mode=text]{13.9}{\ms} 
        & \qty[mode=text]{1481.3}{\ms} \\
    & \gocXXL 
        & \qty[mode=text]{9.5}{\ms} 
        & \qty[mode=text]{10.0}{\ms} 
        & \revision{}{\qty[mode=text]{9.3}{\ms}}
        & \qty[mode=text]{443.0}{\ms} 
        & \qty[mode=text]{14.4}{\ms} 
        & \qty[mode=text]{4566.9}{\ms} \\
\midrule
SSL 
    & \ieeeSmall 
        & \qty[mode=text]{3.4}{\ms} 
        & \qty[mode=text]{6.0}{\ms} 
        & \revision{}{\qty[mode=text]{3.7}{\ms}}
        & \qty[mode=text]{15.1}{\ms} 
        & \qty[mode=text]{5.2}{\ms} 
        & \qty[mode=text]{12.1}{\ms} \\
    & \pegaseSmall 
        & \qty[mode=text]{4.0}{\ms} 
        & \qty[mode=text]{5.3}{\ms} 
        & \revision{}{\qty[mode=text]{4.3}{\ms}}
        & \qty[mode=text]{18.4}{\ms} 
        & \qty[mode=text]{5.8}{\ms} 
        & \qty[mode=text]{51.5}{\ms} \\
    & \rteLarge 
        & \qty[mode=text]{5.9}{\ms} 
        & \qty[mode=text]{6.5}{\ms} 
        & \revision{}{\qty[mode=text]{6.3}{\ms}}
        & \qty[mode=text]{36.7}{\ms} 
        & \qty[mode=text]{9.5}{\ms} 
        & \qty[mode=text]{364.4}{\ms} \\
    & \pegaseLarge 
        & \qty[mode=text]{6.1}{\ms} 
        & \qty[mode=text]{7.0}{\ms} 
        & \revision{}{\qty[mode=text]{6.3}{\ms}}
        & \qty[mode=text]{93.2}{\ms} 
        & \qty[mode=text]{10.3}{\ms} 
        & \qty[mode=text]{913.5}{\ms} \\
    & \pegaseXLarge 
        & \qty[mode=text]{7.1}{\ms} 
        & \qty[mode=text]{8.2}{\ms} 
        & \revision{}{\qty[mode=text]{7.2}{\ms}}
        & \qty[mode=text]{561.2}{\ms} 
        & \qty[mode=text]{12.8}{\ms} 
        & \qty[mode=text]{1481.3}{\ms} \\
    & \gocXXL 
        & \qty[mode=text]{10.9}{\ms} 
        & \qty[mode=text]{11.7}{\ms} 
        & \revision{}{\qty[mode=text]{9.4}{\ms}}
        & \qty[mode=text]{444.2}{\ms} 
        & \qty[mode=text]{21.7}{\ms} 
        & \qty[mode=text]{4566.9}{\ms} \\
\bottomrule
\end{tabular}

        }\\
        \footnotesize{$^{\dagger}$with 200 gradient steps. $^{*}$solution time per instance (single thread).\\All ML inference times are for a batch of 256 instances.}
    \end{table}

    \begin{table}[!t]
        \centering
        \caption{Solving and inference time comparison (ED-R)}
        \label{tab:exp:ED-R:inference_time}
        \resizebox{\columnwidth}{!}{%
            \begin{tabular}{llrrrrr}
\toprule
Loss & System         &      DNN &    E2ELR  &   \revision{}{DeepOPF} &     DC3$^{\dagger}$ & GRB$^{*}$\\
\midrule
SL 
    & \ieeeSmall 
        & \qty[mode=text]{   3.9}{\ms} 
        & \qty[mode=text]{   6.5}{\ms}
        & \revision{}{\qty[mode=text]{4.6}{\ms}}
        & \qty[mode=text]{  16.5}{\ms} 
        & \qty[mode=text]{  12.6}{\ms} \\
    & \pegaseSmall 
        & \qty[mode=text]{   4.5}{\ms} 
        & \qty[mode=text]{   6.0}{\ms} 
        & \revision{}{\qty[mode=text]{4.8}{\ms}}
        & \qty[mode=text]{  18.9}{\ms} 
        & \qty[mode=text]{  56.5}{\ms} \\
    & \rteLarge 
        & \qty[mode=text]{   5.7}{\ms} 
        & \qty[mode=text]{  10.4}{\ms} 
        & \revision{}{\qty[mode=text]{6.2}{\ms}}
        & \qty[mode=text]{  36.1}{\ms} 
        & \qty[mode=text]{ 333.6}{\ms} \\
    & \pegaseLarge 
        & \qty[mode=text]{   6.3}{\ms} 
        & \qty[mode=text]{   7.7}{\ms} 
        & \revision{}{\qty[mode=text]{6.7}{\ms}}
        & \qty[mode=text]{  91.6}{\ms} 
        & \qty[mode=text]{1008.0}{\ms} \\
    & \pegaseXLarge 
        & \qty[mode=text]{   8.3}{\ms} 
        & \qty[mode=text]{  10.7}{\ms} 
        & \revision{}{\qty[mode=text]{8.8}{\ms}}
        & \qty[mode=text]{ 531.2}{\ms} 
        & \qty[mode=text]{1632.7}{\ms} \\
    & \gocXXL 
        & \qty[mode=text]{   9.3}{\ms} 
        & \qty[mode=text]{  11.1}{\ms}
        & \revision{}{\qty[mode=text]{10.6}{\ms}}
        & \qty[mode=text]{ 438.7}{\ms} 
        & \qty[mode=text]{4745.7}{\ms} \\
\midrule
SSL 
    & \ieeeSmall 
        & \qty[mode=text]{   3.9}{\ms}
        & \qty[mode=text]{   7.6}{\ms}
        & \revision{}{\qty[mode=text]{4.4}{\ms}}
        & \qty[mode=text]{  17.6}{\ms}
        & \qty[mode=text]{  12.6}{\ms} \\
    & \pegaseSmall 
        & \qty[mode=text]{   4.4}{\ms} 
        & \qty[mode=text]{   5.9}{\ms} 
        & \revision{}{\qty[mode=text]{4.7}{\ms}}
        & \qty[mode=text]{  19.1}{\ms} 
        & \qty[mode=text]{  56.5}{\ms} \\
    & \rteLarge 
        & \qty[mode=text]{   6.4}{\ms} 
        & \qty[mode=text]{  10.5}{\ms} 
        & \revision{}{\qty[mode=text]{6.7}{\ms}}
        & \qty[mode=text]{  37.3}{\ms} 
        & \qty[mode=text]{ 333.6}{\ms} \\
    & \pegaseLarge 
        & \qty[mode=text]{   7.1}{\ms} 
        & \qty[mode=text]{   8.3}{\ms} 
        & \revision{}{\qty[mode=text]{7.2}{\ms}}
        & \qty[mode=text]{  92.9}{\ms} 
        & \qty[mode=text]{1008.0}{\ms} \\
    & \pegaseXLarge 
        & \qty[mode=text]{   7.8}{\ms} 
        & \qty[mode=text]{   8.9}{\ms} 
        & \revision{}{\qty[mode=text]{7.9}{\ms}}
        & \qty[mode=text]{ 522.4}{\ms} 
        & \qty[mode=text]{1632.7}{\ms} \\
    & \gocXXL 
        & \qty[mode=text]{  10.2}{\ms} 
        & \qty[mode=text]{  12.4}{\ms} 
        & \revision{}{\qty[mode=text]{10.3}{\ms}}
        & \qty[mode=text]{ 435.8}{\ms} 
        & \qty[mode=text]{4745.7}{\ms} \\
\bottomrule
\end{tabular}

        }
        \footnotesize{$^{\dagger}$with 200 gradient steps. $^{*}$solution time per instance (single thread).\\All ML inference times are for a batch of 256 instances.}
    \end{table}

\subsection{Benefits of End-to-End Training}
\label{sec:results:benefits_e2e}

    \begin{table}[!t]
        \centering
        \caption{Comparison of optimality gaps (\%) with and without feasibility restoration (ED)}
        \label{tab:exp:ED:opt_gap:FFR}
        \resizebox{\columnwidth}{!}{\begin{tabular}{llrrrrrrrrrr}
\toprule
&& & \multicolumn{3}{c}{DNN} & \multicolumn{3}{c}{\revision{}{DeepOPF}} & \multicolumn{3}{c}{DC3}\\
\cmidrule(lr){4-6}
\cmidrule(lr){7-9}
\cmidrule(lr){10-12}
Loss & System         &    E2ELR & \multicolumn{1}{c}{--} & \multicolumn{1}{c}{FL} & \multicolumn{1}{c}{EP} & \multicolumn{1}{c}{--} & \multicolumn{1}{c}{FL} & \multicolumn{1}{c}{EP} & \multicolumn{1}{c}{--} & \multicolumn{1}{c}{FL} & \multicolumn{1}{c}{EP}\\
\midrule
SL   & \ieeeSmall     & \textbf{    1.42} &    69.55 &    36.33 &    36.37 &     \revision{}{2.81} &     \revision{}{2.81} &     \revision{}{2.81} &     3.03 &     3.03 &     3.03\\
     & \pegaseSmall   & \textbf{    0.74} &    48.77 &     3.98 &     3.94 &     \revision{}{7.78} &     \revision{}{5.24} &     \revision{}{5.28} &     2.80 &     2.44 &     2.44\\
     & \rteLarge      & \textbf{    1.35} &    55.13 &    21.28 &    21.41 &    \revision{}{28.23} &     \revision{}{1.89} &     \revision{}{1.89} &     3.68 &     3.36 &     3.36\\
     & \pegaseLarge   & \textbf{    0.38} &    76.06 &    34.61 &    34.65 &    \revision{}{33.20} &     \revision{}{1.94} &     \revision{}{1.99} &     1.25 &     1.24 &     1.24\\
     & \pegaseXLarge  & \textbf{    0.29} &    71.14 &    32.70 &    32.72 &    \revision{}{64.93} &    \revision{}{23.36} &    \revision{}{23.36} &     1.79 &     1.79 &     1.79\\
     & \gocXXL        & \textbf{    0.46} &   194.13 &    57.53 &    57.41 &    \revision{}{55.91} &    \revision{}{30.19} &    \revision{}{30.16} &     2.75 &     2.45 &     2.45\\
\midrule
SSL  & \ieeeSmall     & \textbf{    0.74} &    35.66 &     3.82 &     3.73 &     \revision{}{2.23} &     \revision{}{2.23} &     \revision{}{2.23} &     2.51 &     2.51 &     2.51\\
     & \pegaseSmall   & \textbf{    0.63} &    62.07 &     3.24 &     3.25 &    \revision{}{10.83} &     \revision{}{3.21} &     \revision{}{3.18} &     2.57 &     2.35 &     2.35\\
     & \rteLarge      & \textbf{    1.30} &    40.73 &    11.52 &    11.47 &    \revision{}{42.28} &     \revision{}{5.38} &     \revision{}{5.37} &     2.82 &     2.10 &     2.09\\
     & \pegaseLarge   & \textbf{    0.32} &    43.68 &     3.20 &     3.22 &    \revision{}{34.33} &     \revision{}{4.73} &     \revision{}{4.74} &     0.82 &     0.64 &     0.64\\
     & \pegaseXLarge  & \textbf{    0.21} &    57.58 &    20.59 &    20.59 &    \revision{}{60.12} &    \revision{}{18.85} &    \revision{}{18.84} &     0.84 &     0.81 &     0.81\\
     & \gocXXL        & \textbf{    0.39} &   108.91 &     7.89 &     7.89 &     \revision{}{8.39} &     \revision{}{3.06} &     \revision{}{3.06} &     0.72 &     0.62 &     0.62\\
\bottomrule
\end{tabular}
}
        % \footnotesize{All values are shifted geometric means with a shift of 1\%.}
    \end{table}

    \begin{table}[!t]
        \centering
        \caption{Comparison of optimality gaps (\%) with and without feasibility restoration (ED-R)}
        \label{tab:exp:ED-R:opt_gap:FFR}
        \resizebox{\columnwidth}{!}{\begin{tabular}{llrrrrrrrrrr}
\toprule
&& & \multicolumn{3}{c}{DNN} & \multicolumn{3}{c}{\revision{}{DeepOPF}} & \multicolumn{3}{c}{DC3}\\
\cmidrule(lr){4-6}
\cmidrule(lr){7-9}
\cmidrule(lr){10-12}
Loss & System         &    E2ELR & \multicolumn{1}{c}{--} & \multicolumn{1}{c}{FL} & \multicolumn{1}{c}{EP} & \multicolumn{1}{c}{--} & \multicolumn{1}{c}{FL} & \multicolumn{1}{c}{EP} & \multicolumn{1}{c}{--} & \multicolumn{1}{c}{FL} & \multicolumn{1}{c}{EP}\\
\midrule
SL   & \ieeeSmall     & \textbf{    1.52} &    75.06 &    30.47 &    30.49 &     \revision{}{2.80} &     \revision{}{2.80} &     \revision{}{2.80} &     2.94 &     2.94 &     2.94\\
     & \pegaseSmall   & \textbf{    0.74} &    47.84 &     2.52 &     2.50 &     \revision{}{7.50} &     \revision{}{4.79} &     \revision{}{4.79} &     2.97 &     2.34 &     2.34\\
     & \rteLarge      & \textbf{    1.82} &    70.57 &    30.20 &    29.90 &    \revision{}{46.66} &     \revision{}{2.63} &     \revision{}{2.51} &     3.49 &     3.32 &     3.29\\
     & \pegaseLarge   & \textbf{    0.38} &    81.19 &    41.34 &    41.40 &    \revision{}{30.84} &     \revision{}{1.90} &     \revision{}{1.92} &     1.29 &     1.29 &     1.29\\
     & \pegaseXLarge  & \textbf{    0.28} &    76.32 &    30.00 &    30.02 &    \revision{}{69.23} &   \revision{}{25.09} &    \revision{}{25.09} &     1.81 &     1.81 &     1.81\\
     & \gocXXL        & \textbf{    0.45} &   136.25 &    53.34 &    53.41 &    \revision{}{41.34} &   \revision{}{ 22.53} &    \revision{}{22.43} &     2.35 &     2.31 &     2.31\\
\midrule
SSL  & \ieeeSmall     & \textbf{    0.78} &    45.56 &     4.50 &     4.34 &     \revision{}{2.82} &     \revision{}{2.79} &     \revision{}{2.79} &     2.80 &     2.78 &     2.78\\
     & \pegaseSmall   & \textbf{    0.68} &    64.69 &     4.56 &     4.44 &     \revision{}{9.83} &     \revision{}{3.97} &     \revision{}{3.95} &     2.61 &     1.87 &     1.87\\
     & \rteLarge      & \textbf{    1.68} &    55.16 &     9.76 &     9.43 &    \revision{}{48.57} &     \revision{}{8.79} &     \revision{}{8.53} &     3.04 &     2.75 &     2.70\\
     & \pegaseLarge   & \textbf{    0.29} &    44.74 &     4.33 &     4.33 &    \revision{}{42.06} &     \revision{}{2.28} &     \revision{}{2.30} &     0.93 &     0.66 &     0.66\\
     & \pegaseXLarge  & \textbf{    0.19} &    61.28 &    21.35 &    21.32 &    \revision{}{65.38} &    \revision{}{19.64} &    \revision{}{19.64} &     0.91 &     0.89 &     0.89\\
     & \gocXXL        & \textbf{    0.33} &    93.91 &    10.00 &     9.98 &     \revision{}{9.47} &     \revision{}{2.58} &     \revision{}{2.58} &     0.71 &     0.64 &     0.64\\
\bottomrule
\end{tabular}
}
        % \footnotesize{All values are shifted geometric means with a shift of 1\%.}
    \end{table}

Tables \ref{tab:exp:ED:opt_gap:FFR} and \ref{tab:exp:ED-R:opt_gap:FFR}
further demonstrate the benefits of training end-to-end feasible
models: they report, for ED and ED-R problems, the optimality gaps
achieved by DNN, \revision{}{DeepOPF} and DC3 \emph{after applying a repair step at
  inference time}. Two repair mechanisms are compared: the proposed
Repair Layers (RL) and a Euclidean Projection (EP).  The tables also
report the mean gap achieved by E2ELR as a reference baseline.  The
results can be summarized as follows. First, the additional
feasibility restoration improves the quality of the initial
prediction.  This is especially true for DNN \revision{}{and DeepOPF}, which exhibited the
largest constraint violations (see Table \ref{tab:res:violation}):
optimality gaps are improved by a factor 2--20, but remain very high
nonetheless.  Second, the two repair mechanisms yield similar
optimality gaps.  For DC3, there is virtually no difference between RL
and EP.  Third, across all experiments, even after feasibility
restoration, E2ELR remains the best-performing model, with optimality
gaps 2--6x smaller than DC3. Table \ref{tab:exp:feasrec_time} compares
the computing times of the feasibility restoration using either the
repair layers (RL) or the euclidean projection (EP).  The latter is
solved as a quadratic program with Gurobi.  All benchmarks are
conducted in Julia on a single thread, using the
\texttt{BenchmarkTools} utility \cite{BenchmarkTools.jl-2016}, and
median times are reported.  The results of Table
\ref{tab:exp:feasrec_time} show that evaluating the proposed repair
layers is three orders of magnitude faster than solving the euclidean
projection problem.

    \begin{table}[!t]
        \centering
        \caption{Computing time of feasibility restoration using feasibility layers (FL) and Euclidean projection (EP).}
        \label{tab:exp:feasrec_time}
        \begin{tabular}{llrrr}
            \toprule
            Problem  & System           & \multicolumn{1}{c}{RL} & \multicolumn{1}{c}{EP} & Speedup \\
            \midrule
            ED    & \ieeeSmall       & \qty[mode=text]{  0.13}{ \us} & \qty[mode=text]{  0.45}{ \ms} & 3439x\\
                     & \pegaseSmall     & \qty[mode=text]{  0.55}{ \us} & \qty[mode=text]{  1.41}{ \ms} & 2572x\\
                     & \rteLarge        & \qty[mode=text]{  1.40}{ \us} & \qty[mode=text]{  3.75}{ \ms} & 2686x\\
                     & \pegaseLarge     & \qty[mode=text]{  2.37}{ \us} & \qty[mode=text]{  6.90}{ \ms} & 2911x\\
                     & \pegaseXLarge    & \qty[mode=text]{  6.42}{ \us} & \qty[mode=text]{ 20.71}{ \ms} & 3227x\\
                     & \gocXXL          & \qty[mode=text]{  5.67}{ \us} & \qty[mode=text]{ 17.87}{ \ms} & 3155x\\
            \midrule
            ED-R  & \ieeeSmall       & \qty[mode=text]{  1.06}{ \us} & \qty[mode=text]{  1.00}{ \ms} & 939x\\
                     & \pegaseSmall     & \qty[mode=text]{  4.58}{ \us} & \qty[mode=text]{  3.42}{ \ms} & 748x\\
                     & \rteLarge        & \qty[mode=text]{ 10.46}{ \us} & \qty[mode=text]{ 10.19}{ \ms} & 974x\\
                     & \pegaseLarge     & \qty[mode=text]{ 20.14}{ \us} & \qty[mode=text]{ 18.38}{ \ms} & 913x\\
                     & \pegaseXLarge    & \qty[mode=text]{ 42.67}{ \us} & \qty[mode=text]{ 60.73}{ \ms} & 1423x\\
                     & \gocXXL          & \qty[mode=text]{ 39.80}{ \us} & \qty[mode=text]{ 49.17}{ \ms} & 1236x\\
            \bottomrule
        \end{tabular}\\
        \footnotesize{Median computing times as measured by \texttt{BenchmarkTools}}
    \end{table}

\section{Conclusion}
\label{sec:conclusion}

The paper proposed a new \emph{End-to-End Learning and Repair} (E2ELR)
architecture for training optimization proxies for economic dispatch
problems. E2ELR combines deep learning with closed-form, differential
repair layers, thereby integrating prediction and feasibility
restoration in an end-to-end fashion. The E2ELR architecture can be
trained with self-supervised learning, removing the need for
labeled data and the solving of numerous optimization problems
offline. The paper conducted extensive numerical experiments on the
ecocomic dispatch of large-scale, industry-size power grids with tens
of thousands of buses.  It also presented the first study that
considers reserve requirements in the context of optimization proxies,
reducing the gap between academic and industry formulations.
The results demonstrate that the combination of E2ELR and
self-supervised learning achieves state-of-the-art performance, with
optimality gaps that outperform other baselines by at least an order
of magnitude.  Future research will investigate security-constrained
economic dispatch (SCED) formulations, and the extension of repair
layers to \revision{}{thermal constraints, multi-period settings and the nonlinear, non-convex AC-OPF.}

\section*{Acknowledgments}

This research is partly funded by NSF awards 2007095 and 2112533, and by ARPA-E PERFORM award AR0001136.

% \begin{thebibliography}{1}
\bibliographystyle{IEEEtran}
\bibliography{refs.bib}

\clearpage
\appendix
\subsection{Proofs}
\label{app:proofs}

\subsubsection{Proof of Theorem \ref{thm:power_balance_layer}}

    Let $\mathbf{\tilde{p}} \, {=} \, \mathcal{P}(\pg)$, and assume $\mathbf{e}^{\top} \pg \, {<} \, D$.
    It is immediate that $\etaup \, {\in} \, [0, 1]$, i.e., $\mathbf{\tilde{p}}$ is a convex
    combination of $\pg$ and $\pgmax$.
    Thus, $\mathbf{\tilde{p}} \in
    \hypercube$.  Then,
        \begin{align*}
            \mathbf{e}^{\top}\mathbf{\tilde{p}}
            &= (1 - \etaup) \mathbf{e}^{\top}\pg + \etaup \mathbf{e}^{\top}\bar{\pg}\\
            &= \etaup(\mathbf{e}^{\top}\bar{\pg} - \mathbf{e}^{\top}\pg) + \mathbf{e}^{\top} \pg\\
            &= \frac{D - \mathbf{e}^{\top} \pg}{\mathbf{e}^{\top} \mathbf{\bar{p}} - \mathbf{e}^{\top}\pg} (\mathbf{e}^{\top}\bar{\pg} - \mathbf{e}^{\top}\pg) + \mathbf{e}^{\top} \pg\\
            &= D - \mathbf{e}^{\top}\pg + \mathbf{e}^{\top} \pg = D.
        \end{align*}
        Thus, $\mathbf{\tilde{p}}$ satisfies the power balance and $\mathbf{\tilde{p}} \in \hypersimplex{D}$.
        Similarly, assume $\mathbf{e}^{\top}\pg \geq D$.
        Then
        \begin{align*}
            \mathbf{e}^{\top}\mathbf{\tilde{p}}
            &= (1 - \etadn) \mathbf{e}^{\top}\pg + \etadn \mathbf{e}^{\top}\mathbf{0}\\
            &= (1 - \frac{\mathbf{e}^{\top}\pg - D}{\mathbf{e}^{\top} \pg}) \mathbf{e}^{\top}\pg = D,
        \end{align*}
        which concludes the proof.
        \hfill \qed

\subsubsection{Proof of Theorem \ref{thm:reserve_layer:guaranteed_feasibility}}

    Let $\pg$ be the initial prediction, and let $\mathbf{\tilde{p}} = \mathcal{R}(\pg)$.
    First, the bound constraints $\mathbf{0} \, {\leq} \, \pg \, {\leq} \, \mathbf{\bar{p}}$ are satisfied because $\mathbf{\tilde{p}}$ is a convex combination of $\pg$ and $\pgmax - \resmax$, both of which satisfy bound constraints.
    
    Second, we show that $\mathbf{e}^{\top}\pg = D$.
    Indeed, we have
    \begin{align}
        \sum_{g} \tilde{p}_{g} &= \sum_{g \in G^{\uparrow}} \tilde{p}_{g} + \sum_{g \in G^{\downarrow}} \tilde{p}_{g}
    \end{align}
    This yields
    \begin{align*}
        \sum_{g \in G^{\uparrow}} \tilde{p}_{g} &= \sum_{g \in G^{\uparrow}} (1 - \alpha^{\uparrow}) p_{g} + \alpha^{\uparrow} (\bar{p}_{g} - \bar{r}_{g})\\
        &= \sum_{g \in G^{\uparrow}} p_{g} + \alpha^{\uparrow} \sum_{g \in G^{\uparrow}} (\bar{p}_{g} - \bar{r}_{g} - p_{g})\\
        &= \sum_{g \in \uparrow} p_{g} + \alpha^{\uparrow} \Delta^{\uparrow}\\
        &= \sum_{g \in G^{\uparrow}} p_{g} + \Delta \\
        \sum_{g \in G^{\downarrow}} \tilde{p}_{g}
        &= \sum_{g \in G^{\downarrow}} (1 - \alpha^{\downarrow}) p_{g} + \alpha^{\downarrow} (\bar{p}_{g} - \bar{r}_{g})\\
        &= \sum_{g \in G^{\downarrow}} p_{g} + \alpha^{\downarrow} \sum_{g \in G^{\downarrow}} (\bar{p}_{g} - \bar{r}_{g} - p_{g})\\
        &= \sum_{g \in G^{\downarrow}} p_{g} - \alpha^{\downarrow} \Delta^{\downarrow}\\
        &= \sum_{g \in G^{\downarrow}} p_{g} - \Delta
    \end{align*}
    Summing the two terms yields $\mathbf{e}^{\top}\pg + \Delta - \Delta = D$.
        
    Finally, we show that Algorithm \ref{alg:FFR:reserves} provides a feasible point iff DCOPF is feasible.
    If $\mathbf{\tilde{p}}$ is reserve feasible, then DCOPF is trivially feasible.
    
    Assume $\mathbf{\tilde{p}}$ is infeasible, i.e., $\sum_{g} \min(\bar{r}_{g}, \bar{p} - \tilde{p}_{g}) < R$.
    Note that if $\Delta = \Delta_{R}$ in Algorithm \ref{alg:FFR:reserves}, then $\mathbf{\tilde{p}}$ is feasible, so we must have $\Delta^{\uparrow} < \Delta_{R}$ or $\Delta^{\downarrow} < \Delta_{R}$.
    
    Assume the former holds, i.e., $\Delta^{\uparrow} < \Delta_{R}, \Delta_{\downarrow}$.
    In other words:
    \begin{align*}
        \sum_{g \in G^{\uparrow}} \bar{p}_{g} - \bar{r}_{g} - p_{g} < R - \sum_{g \in G^{\uparrow}} \bar{r}_{g} - \sum_{g \in G^{\downarrow}} \bar{p}_{g} - p_{g}\\
        \sum_{g \in G^{\uparrow}} (\bar{p}_{g} - \bar{r}_{g} - p_{g} + \bar{r}_{g}) < R - \sum_{g \in G^{\downarrow}} \bar{p}_{g} - p_{g}\\
        \sum_{g \in G^{\uparrow}} (\bar{p}_{g} - p_{g}) < R - \sum_{g \in G^{\downarrow}} \bar{p}_{g} - p_{g}\\
        \sum_{g \in G^{\uparrow}} \bar{p}_{g} + \sum_{g \in G^{\downarrow}} \bar{p}_{g} < R - \sum_{g} p_{g}\\
        \sum_{g} \bar{p}_{g} < R - D
    \end{align*}
    Summing constraints \eqref{eq:DCOPF:eco_max} and \eqref{eq:DCOPF:power_balance}, we obtain
    \begin{align*}
        \sum_{g} r_{g} \leq (\sum_{g} \bar{p}_{g}) - D < R
    \end{align*}
    hence DCOPF is infeasible.
    The proof for $\Delta^{\downarrow} < \Delta_{R}$ is similar.
    \hfill \qed
\subsection{Details of the baseline models}
\label{sec:appendix:baseline}

Both DC3 \cite{donti2021dc3} and LOOP-LC \cite{Li2022_LOOP-LC} follow the steps of neural network prediction, inequality correction, and equality completion.
First, the decision variables are divided into two groups: $|\mathcal{G}|-N_{eq}$ independent decision variables and $N_{eq}$ dependent decision variables, where $N_{eq}$ indicates the number of equality constraints. 
In the PTDF formulation of DC-OPF, the only equality constraint is the power balance constraint \eqref{eq:DCOPF:power_balance}, and thus $N_{eq}=1$. 
Therefore, given the dispatches of the independent generator are predicted, the dispatch of the dependent generator can be recovered by 
\begin{equation}
    p_1 = D - \sum_{g\in{\mathcal{G}}\setminus1}p_g. \label{eq:dc3:recover_den_variable}
\end{equation}
DC3 and LOOP-LC differ in their inequality corrections.

\paragraph{DC3}
Given the input load profile $\mathbf{l} \in \mathbb{R}^{|\mathcal{L}|}$, the neural network outputs $\mathbf{z} \in [0, 1]^{|\mathcal{G}|-1}$.
This is achieved by applying a sigmoid function to the final layer of the network.
Then the capacity constraints \eqref{eq:DCOPF:eco_max} are enforced by:
\[p_g = z_g*\bar{p}_g, \;\; \forall g \in \mathcal{G}\setminus 1.\]

In the inequality correction steps, DC3 minimizes the constraint violation by unrolling gradient descent with a fixed number of iterations $T$.
Denote the constraint violation \[g(\mathbf{p}) = \sum_{g\in \mathcal{G}} \max(p_g - \bar{p}_g, 0) + \max (R - \sum_{g\in \mathcal{G}} r_g, 0),\]
where $r_g = \min\{\bar{r}_{g}, \bar{p}_{g} - p_{g}\}$ and $p_1 = D - \sum_{g\in{\mathcal{G}}\setminus1}p_g$.
The dispatch is updated using: 
\begin{align*}
    \mathbf{p}^{t} = \mathbf{p}^{t-1} - \rho*\nabla_\mathbf{p}\|g(\mathbf{p}^{t-1})\|^2_2,
\end{align*}
where $\mathbf{p}^{0}$ is the output of the neural network. 
In the experiment, the step size $\rho$ is set as $1e-4$ and the total iteration $T$ is set as $50$ when training and $200$ when testing.
The longer testing $T$ is suggested in \cite{donti2021dc3} to mitigate the constraint violation of DC3 predictions.

\paragraph{LOOP-LC}
Similar to DC3, the neural network maps the load profile $\mathbf{l} \in \mathbb{R}^{|\mathcal{L}|}$ to $\mathbf{z} \in [0, 1]^{|\mathcal{G}|-1}$ by applying a sigmoid function at the end.
In the inequality correction step, LOOP-LC uses gauge function mapping $\mathbf{z}$ in the $l_\infty$ norm ball to the dispatches in the feasible region.
The gauge mapping needs an interior point to shift the domain.
The work in \cite{Li2022_LOOP-LC} proposes an interior point finder by solving an optimization, which could be computationally expensive.
Instead, the experiments exploit the proposed feasibility restoration layers to find the interior point effectively. 
Specifically, the interior point finder consists of two steps.
First, the optimal dispatches of the nominal case $\mathbf{p}^{n}$ are obtained by solving the instance with the nominal active power demand as the input, where the upper bounds $\bar{p}_{g}$ in constraints \eqref{eq:DCOPF:eco_max} and \eqref{eq:DCOPF:dispatch_bounds} are scaled with $\beta \in (0,1)$: $\pg + \res \leq \beta \pgmax$ and $\mathbf{0} \leq \pg \leq \beta \pgmax$,
where the $\beta$ is set as $0.8$ in the experiments.
The scaling aims at providing a more interior point such that the gauge mapping is more smooth.
However, the $\mathbf{p}^{n}$ may not be feasible when changing the input load profile $\mathbf{l}$.
To obtain the feasible solution, the proposed feasibility layers are used to convert $\mathbf{p}^{n}$ to an interior point.

\subsection{Hyperparameter Tuning}
\label{sec:appendix:hyperparameters}

For each test case and method, the number of instances in the training and test minibatch are set to 64 and 256, respectively.
For all deep learning models, a batch normalization layer \cite{ioffe2015batch} and a dropout layer \cite{srivastava2014dropout} with a dropout rate $0.2$ are appended after each dense layer except the last one.
The number of layers $l$ is selected from $\{3, 4, 5\}$ and the hidden dimension $hd$ of the dense layers is selected from $\{128, 256\}$.

The penalty coefficients $\lambda$ of the constraint violation in the loss function \ref{eq:SL:loss:final} and \ref{eq:SSL:loss} are selected from $\{1, 0.1\}$ for self-supervised learning and selected from $\{1e-3, 1e-4, 1e-5\}$ for supervised learning. For the models with feasibility guarantees such as DNN-F and LOOP, the $\lambda$ is set as 0 for self-supervised learning. $\mu$ is set to be equal to $\lambda$ for supervised learning.
For DC3 model, the unrolled iteration is set as $50$ iterations in training and $200$ iterations in testing. The gradient step size is set as $1e-4$, where a larger step size results in numerical issues.

The models are trained with Adam optimizer \cite{kingma2014adam} with an initial learning rate is set as $1e-2$ and weight delay $1e-6$. The learning rate is decayed by 0.1 when the validation loss does not improve for consecutive 10 epochs and the training early stops if the validation loss does not decrease for consecutive 20 epochs. The maximum training time is set as 150 minutes.

   % \begin{table}[!t]
   %     \centering
   %     \caption{Hyperparameters of the best models for DCOPF}
   %     \label{tab:exp:DCOPF:hyperparameters}
   %     \resizebox{\columnwidth}{!}{\input{tables/hyperparameters_DCOPF}}
   % \end{table}

   % \begin{table}[!t]
   %     \centering
   %     \caption{Hyperparameters of the best models for DCOPF-R}
   %     \label{tab:exp:DCOPF-R:hyperparameters}
   %     \resizebox{\columnwidth}{!}{\input{tables/hyperparameters_DCOPF-R}}
   % \end{table}

\end{document}